\documentclass[11pt]{article}
\usepackage[margin=1in]{geometry}

\usepackage{xspace}
\usepackage{xcolor}
\usepackage{amscd}
\usepackage{amsmath}
\usepackage{amssymb}
\usepackage{amstext}
\usepackage{amsthm}
\usepackage{thmtools}
\usepackage{thm-restate}
\usepackage{bm}
\usepackage{colonequals}
\usepackage{tikz}
\usepackage{booktabs}
\usepackage{array}
\usepackage{tabularx}
\usepackage{enumitem}
\usepackage[pagebackref=true,colorlinks,urlcolor=blue,linkcolor=purple,citecolor=blue,bookmarks,bookmarksopen,bookmarksnumbered]{hyperref}
\usepackage[capitalise,nameinlink]{cleveref}

\usepackage{mathtools}
\usepackage{amssymb, amsfonts, amsthm}

\usepackage{array}
\usepackage{verbatim}
\usepackage{mathrsfs}
\usepackage{graphicx}
\usepackage{algorithm}
\usepackage{algpseudocode}
\usepackage{caption}



\newcommand*{\alglineref}[1]{\hyperref[#1]{Line~\ref*{#1}}}

\usepackage{tikz}
\usetikzlibrary{decorations.pathreplacing}

\usepackage[toc,page]{appendix}

\usepackage{titlesec}
\titleformat{\subsubsection}[runin]
{\normalfont\normalsize\bfseries}
{\thesubsubsection}{1em}{}

\theoremstyle{plain}
\newtheorem{theorem}{Theorem}[section]
\newtheorem{lemma}[theorem]{Lemma}

\theoremstyle{definition}
\newtheorem{definition}[theorem]{Definition}

\theoremstyle{remark}
\newtheorem{remark}[theorem]{Remark}

\newcommand{\sA}{\mathcal{A}}

\newcommand{\sC}{\mathcal{C}}

\newcommand{\sM}{\mathcal{M}}
\newcommand{\sN}{\mathcal{N}}

\newcommand{\sP}{\mathcal{P}}

\newcommand{\sT}{\mathcal{T}}

\newcommand{\Z}{\mathbb{Z}}

\newcommand{\R}{\mathbb{R}}

\DeclareSymbolFont{bbold}{U}{bbold}{m}{n}
\DeclareSymbolFontAlphabet{\mathbbold}{bbold}

\DeclareMathOperator*{\E}{\mathbb{E}}

\renewcommand{\epsilon}{\varepsilon}

\newcommand{\conv}{\operatorname{conv}}
\newcommand{\len}{\mathsf{len}}
\newcommand{\lcm}{\mathsf{lcm}}
\newcommand{\wind}{\mathsf{wind}}

\newif\ifshownotes
\shownotestrue

\ifshownotes
	\newcommand{\hstnote}[1]{\textcolor{red}{\textbf{[Shengtang: #1]}}}
\else
	\newcommand{\hstnote}[1]{}
\fi

\title{Stronger Lower Bounds for Tree Covers via Cyclic Symmetry}
\author{\smallskip
Shengtang Huang\\
\small Center on Frontiers of Computing Studies,\\
\small Peking University\\
\small \texttt{peanuttang1320061044@gmail.com}
}
\date{}
\begin{document}

\maketitle
\begin{abstract}
A tree cover of an $n$-point metric space is a collection of $k$ dominating trees such that every pairwise distance is approximately preserved by at least one tree. The best known general upper bound on the distortion is $\widetilde{O}(n^{1/k})$. Recently, Chen, Tan, and Xu (ITCS 2026, SICOMP 2026) proved a lower bound of $\Omega_k(n^{1/2^{k-1}})$ using a topological approach.

We improve their lower bound to $\Omega_k(n^{1/[k(p-1)]})=\Omega_k(n^{1/O(k^2)})$, where $p$ is the smallest prime strictly larger than $k$. Thus, the gap between the known upper and lower exponents is reduced from exponential in $k$ to a factor of $O(k)$.

Our key observation is a qualitative difference between the antipodal symmetry underlying the binary labels in the previous approach and the cyclic symmetry used here. In the binary setting, every joint label has a unique antipodal partner, whereas every label in $\Z_p^k$ has many partners that differ from it in every coordinate. This flexibility allows an equivariant Borsuk--Ulam-type theorem in only $k(p-1)$ dimensions to produce two nearby vertices with different labels in all $k$ trees. A cyclic unwinding argument then shows that they are far apart in every tree.
\end{abstract}

\section{Introduction}
\label{sec:intro}

A finite \emph{metric space} is a pair $(X,d_X)$, where $X$ is a finite set and $d_X \colon X \times X \to \R_{\ge 0}$ satisfies $d_X(x,y)=0$ if and only if $x=y$, symmetry, and the triangle inequality. Let $|X|=n$. For an edge-weighted tree $T$ with vertex set $X$, we denote its shortest-path metric by $d_T$.

An $(\alpha,k)$-\emph{tree cover} of $(X,d_X)$ is a collection $\sT=\{T_1,\ldots,T_k\}$ of $k$ edge-weighted trees on $X$ such that every tree is \emph{dominating}, i.e.
$$
    d_{T_i}(x,y)\ge d_X(x,y)
    \qquad
    \text{for every }x,y\in X\text{ and }i\in[k],
$$
and every pair is well approximated in at least one tree,
$$
    \min_{i\in[k]} d_{T_i}(x,y)
    \le
    \alpha\cdot d_X(x,y)
    \qquad
    \text{for every }x,y\in X.
$$
The integer $k$ is called the \emph{size} of the tree cover, and $\alpha$ is called its \emph{distortion}. The central question is to determine the best possible trade-off between these two parameters.

Tree covers provide a way to approximate a complicated metric by a small collection of simple tree metrics. They have found applications in routing~\cite{AP92,CGMZ16}, network design~\cite{GHR06}, distance and proximity oracles~\cite{MN07}, metric Ramsey theory~\cite{BFN22,BLMN03}, and Steiner point removal~\cite{CCLMS23,CCLMST24_Shortcut}. Moreover, the union of the trees forms a structured metric spanner in which the approximate path for every pair is contained entirely in one tree~\cite{BFN22}.

Strong positive results are known for several structured families of metrics. For finite subsets of $d$-dimensional Euclidean space, the Dumbbell Theorem of Arya, Das, Mount, Salowe, and Smid~\cite{ADMSS95} gives distortion $1+\varepsilon$ using $O_d(\varepsilon^{-d}\log(1/\varepsilon))$ trees. Chang, Conroy, Le, Milenkovi{\'c}, Solomon, and Than~\cite{CCLMST24_Euclidean} subsequently improved the number of trees to $O_d(\varepsilon^{-(d-1)}\log(1/\varepsilon))$, which is optimal up to the logarithmic factor. At the opposite end of the trade-off, when minimizing the number of trees, three independent works~\cite{BKT26,CTX26,LMSZ26} recently proved that every finite subset of the Euclidean plane admits a constant-distortion tree cover consisting of only two trees. Constant-size covers with distortion $1+\varepsilon$ are also known for planar and minor-free metrics~\cite{CCLMS23,CCLMST24_Shortcut}, while doubling metrics admit covers whose size and distortion depend only on the doubling dimension~\cite{BFN22,CGMZ16}.

For general metrics, however, the size--distortion trade-off remains far less understood. When $k=1$, the optimal distortion is $\Theta(n)$: the upper bound follows from a minimum spanning tree, while the cycle metric gives the matching lower bound~\cite{RR98}. For arbitrary $k$, Bartal, Fandina, and Neiman~\cite{BFN22} constructed size-$k$ tree covers with distortion $\widetilde{O}(n^{1/k})$. Their construction satisfies the stronger Ramsey property that, for every point, one tree simultaneously approximates its distances to all other points.

Recently, Chen, Tan, and Xu~\cite{CTX26} introduced a topological approach to proving lower bounds for tree covers. By constructing a high-dimensional grid-like metric and applying Tucker's lemma, they showed that every size-$k$ tree cover can be forced to incur distortion $\Omega_k(n^{1/2^{k-1}})$. In particular, for $k=2$, this gives an $\Omega(\sqrt{n})$ lower bound, nearly matching the known upper bound. Their work reveals a close connection between tree covers and combinatorial fixed-point theorems, but the dimension of their construction grows exponentially with $k$.

\subsection{Our Result}

The lower bound of Chen, Tan, and Xu~\cite{CTX26} establishes polynomial distortion for every fixed $k$, but its exponent decreases exponentially with $k$. We show that this exponential loss is not inherent in the topological approach.

\begin{theorem}
\label{thm:main_thm}
For every $k \ge 8$, there is an $n$-point metric, such that any size-$k$ tree cover has distortion $\Omega_k(n^{1 / [k (p - 1)]})$, where $p$ is the smallest prime strictly larger than $k$.
\end{theorem}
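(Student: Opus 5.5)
We will follow the topological template of Chen, Tan, and Xu, but replace the antipodal ($\Z_2$) symmetry by a free $\Z_p$-action. Set $d = k(p-1)$ and let $m$ be a large integer. The metric is a discretization of a $d$-dimensional space carrying a free $\Z_p$-action. Concretely, we take a triangulated $\Z_p$-invariant sphere-like complex, for instance the join $E_{d}\Z_p$ or a cubical subdivision of $(\R^{p-1})^{k}$ restricted to a shell. Its vertices form a grid of side length $m$, so that $n = \Theta_k(m^{d})$. We put the graph (shortest-path) metric of the $1$-skeleton on it. The target is distortion $\Omega_k(m) = \Omega_k(n^{1/d})$.

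Given a tree cover $T_1,\dots,T_k$ with distortion $\alpha \le c\,m$ (small $c$), we build labels. For each tree $T_i$, we use the tree structure to assign each vertex $x$ a label $\ell_i(x)\in\Z_p$. Following the previous paper's idea of a path to a centroid, we consider the unique tree path from $x$ to its image $g\cdot x$ under the generator $g$ of $\Z_p$. We record in which of $p$ "sectors" a distinguished separator or edge of $T_i$ lies, arranged so that $\ell_i(g x)=\ell_i(x)+1$. This equivariance is forced by the cyclic structure: the orbit $x, gx, \dots, g^{p-1}x$ is a cycle of length $\Theta(m)$ in the metric. In the tree this cycle must have a "winding" decomposition, and we define $\ell_i$ via the cumulative position along the unwound orbit.

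The joint label is $\ell(x)=(\ell_1(x),\dots,\ell_k(x))\in\Z_p^k$, an equivariant map for the diagonal action. We then invoke an equivariant Borsuk–Ulam/Tucker-type theorem, namely the $\Z_p$-Tucker lemma of Ziegler and Meunier, or Dold's theorem. The relevant target is the complex on $\Z_p^k$ whose simplices consist of label sets without a pair differing in every coordinate. This complex is a free $\Z_p$-space of dimension less than $d$; we expect dimension about $k(p-2)+\dots$, and need connectivity/dimension below $k(p-1)$. This is where $p>k$ prime is needed: primality gives freeness of the diagonal action on nonzero differences, and $p>k$ supplies enough partners, since a label has $(p-1)^k$ totally-differing partners. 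Dold's theorem then says no equivariant simplicial map exists from a $(d-1)$-connected free complex. So some edge (or small simplex) of the grid contains adjacent vertices $x,y$, with $d_X(x,y)=1$, such that $\ell_i(x)\ne\ell_i(y)$ for every $i$.

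Finally, the cyclic unwinding argument shows $d_{T_i}(x,y)=\Omega(m)$ for every $i$. If $d_{T_i}(x,y)$ were small, the tree paths from $x$ and $y$ toward their orbit images would share all but a short prefix. Since every pair along an orbit is approximated by some tree within $\alpha\ll m$, the recorded sector would then coincide, contradicting $\ell_i(x)\ne\ell_i(y)$. Hence $\min_i d_{T_i}(x,y)\ge\Omega(m)$ while $d_X(x,y)=1$, giving distortion $\Omega_k(n^{1/[k(p-1)]})$. The assumption $k\ge 8$ presumably ensures prime-gap or constant conditions. The main obstacle is the label definition: it must be simultaneously equivariant, locally constant where the tree is short, and must feed a combinatorial complex of dimension below $k(p-1)$. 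I would first fix the topological lemma, then engineer $\ell_i$ to match it.
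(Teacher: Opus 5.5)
Your proposal has two genuine gaps, at exactly the two points you flag as uncertain.

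First, the instance and the labels. You keep each orbit $x,gx,\dots,g^{p-1}x$ as $p$ distinct, far-apart points of the metric, so a tree $T_i$ carries no information linking $x$ to $gx$. An equivariant label is easy to write down, e.g.\ the offset of $x$ from a tree-chosen representative of its orbit. But the property you actually need, that $\ell_i(x)\neq\ell_i(y)$ for nearby $x,y$ forces $d_{T_i}(x,y)$ to be large, does not follow. A small $d_{T_i}(x,y)$ says nothing about $d_{T_i}(g^sx,g^sy)$, so the tree paths from $x$ to $gx$ and from $y$ to $gy$ can diverge right after the short $x$--$y$ segment. Appealing to the cover property does not help: the tree that approximates $(g^sx,g^sy)$ may differ from $T_i$, and may differ from pair to pair.

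The paper's fix is to make each orbit a \emph{single} point of the metric. It takes the lattice points of a $g$-invariant norm ball $B_r\subset\R^{k(p-1)}$ and contracts every boundary orbit via zero-weight special edges $\{x,gx\}$. Tree paths are lifted to the uncontracted graph, and $\ell_T$ is the winding number mod $p$ of the lifted root path plus the offset of the chosen representative. Then $\ell_T(gx)=\ell_T(x)+1$ on the boundary holds automatically. If $\ell_T(u)\neq\ell_T(v)$, the lifted $u$--$v$ tree path has winding $a\neq 0$. Applying $g^{-s}$ to each segment between consecutive special edges, with $s$ the winding accumulated so far, unwinds it into an ordinary path of the same length from $u$ to $g^{-a}v$. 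That path is long by $\rho(x-g^sx)\ge\frac{2}{p}\rho(x)$, or, if $u,v$ are near the center, because it must reach the boundary and come back.

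Second, the topological step. The complex on $\Z_p^k$ whose simplices are label sets with no pair differing in every coordinate does not have dimension below $k(p-1)$. The $p^{k-1}$ labels with first coordinate $0$ form a simplex, so a Dold argument driven by its dimension would need exponential connectivity, which is precisely the loss in the previous approach. The missing ingredient is the centered-simplex encoding:
\begin{itemize}
    \item send $a\in\Z_p$ to $q_a\in\R^{p-1}$, where $q_0,\dots,q_{p-1}$ are the vertices of a centered simplex cyclically permuted by $g$;
    \item extend affinely over a $g$-invariant lattice triangulation of $B_r$ whose simplices have bounded $\rho$-diameter;
    \item apply the equivariant zero theorem in $\R^{k(p-1)}$.
\end{itemize}
At a zero, the barycentric weights give every label of every tree mass exactly $1/p$. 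Drawing two vertices of that simplex independently, a collision in some tree has probability at most $k/p<1$. This yields $u,v$ with all $k$ labels different, close in the metric though not necessarily adjacent. This is where $p>k$ enters. In your language, it is an equivariant map from the bad complex to $\R^{k(p-1)}\setminus\{0\}$, i.e.\ an index bound rather than a dimension bound. The hypothesis $k\ge 8$ plays no role in the argument; it only guarantees $k(p-1)<2^{k-1}$.
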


By Bertrand's postulate~\cite{bertrand1845}, $p<2k$, and hence $k(p-1) = O(k^2)$. Thus, the lower-bound exponent is $1/O(k^2)$, and \Cref{thm:main_thm} gives the more succinct lower bound $\Omega_k(n^{1/O(k^2)})$. This leaves only a factor-$O(k)$ gap from the exponent $1/k$ in the best known upper bound. Moreover, for every $k\geq 8$, we have $k(p-1)<2^{k-1}$, so our result improves the lower bound of~\cite{CTX26}.

The key to the dimension reduction is a qualitative difference between the antipodal $\Z_2$-symmetry underlying the binary labeling of~\cite{CTX26} and the cyclic $\Z_p$-symmetry used in our construction. In their setting, each tree contributes a binary label, so every joint label in $\{-1,+1\}^k$ has a unique label that differs from it in every coordinate, namely its antipode. The desired pairs therefore form $2^{k-1}$ disjoint antipodal pairs, which are encoded separately in their topological argument. In our setting, each tree contributes a label in $\Z_p$, and every joint label in $\Z_p^k$ has $(p-1)^k$ compatible partners that differ from it in every coordinate. This additional flexibility allows an equivariant Borsuk--Ulam-type theorem in only $k(p-1)$ dimensions to produce two nearby vertices with different labels in every tree. A cyclic unwinding argument then shows that they are far apart in all $k$ trees.

\subsection{Paper Organization}

The remainder of the paper is organized as follows. \Cref{sec:technical_overview} provides a technical overview of the proof and highlights the main ideas behind the transition from antipodal to cyclic symmetry. \Cref{sec:preli} introduces the notation and topological preliminaries used throughout the paper. \Cref{sec:proof_of_main_thm} proves \Cref{thm:main_thm}. Finally, \Cref{sec:discussion} discusses the limitations of our approach.
\section{Technical Overview}
\label{sec:technical_overview}

\paragraph{The Approach of \cite{CTX26}.}
Chen, Tan, and Xu~\cite{CTX26} introduced a topological approach to lower bounds for tree covers. We briefly describe their construction and the meaning of their binary labels.

For an integer radius $r$, consider the lattice points in the $\ell_1$-ball
$$
    B_r^d
    :=
    \{x\in\Z^d:\|x\|_1\le r\}.
$$
They define an edge-weighted graph $\widetilde G$ on $B_r^d$ with two types of edges:
$$
    E_0
    :=
    \bigl\{\{x,y\}:\|x-y\|_1=1\bigr\},
    \qquad
    E_{\mathrm{sp}}
    :=
    \bigl\{\{x,-x\}:\|x\|_1=r\bigr\}.
$$
Edges in $E_0$ are called ordinary edges and have weight $1$, whereas edges in $E_{\mathrm{sp}}$ are called special edges and have weight $0$. Thus, every boundary point $x$ is connected at zero cost to its antipode $-x$. The hard instance $G$ is obtained by contracting every special edge, so the two antipodal boundary points $x$ and $-x$ represent the same vertex of $G$.

Let $T$ be a dominating tree on $V(G)$, rooted at the vertex corresponding to the origin. We may assume that every tree edge has weight equal to the distance between its endpoints in $G$. For each tree edge, fix a shortest path in $G$ and lift it to $\widetilde G$. Such a lift follows ordinary edges except when it passes through a contracted boundary vertex: if it enters this vertex through the representative $x$ and leaves through the representative $-x$, the lift inserts the special edge $\{x,-x\}$. Concatenating these fixed lifts along the unique root-to-$v$ path in $T$ gives a path $P^T_{\widetilde G}(0,v)$ in the uncontracted graph whose weighted length is the corresponding tree distance.

The binary label of a lattice point $v\in B_r^d$ records the parity of the special edges on this lifted root path:
$$
    \ell_T(v)
    :=
    \begin{cases}
        +1, & \text{if }P^T_{\widetilde G}(0,v)
        \text{ contains an odd number of special edges},\\
        -1, & \text{if }P^T_{\widetilde G}(0,v)
        \text{ contains an even number of special edges}.
    \end{cases}
$$
For a boundary point $v$, changing the terminal representative from $v$ to $-v$ changes the number of special edges by one. Hence
$$
    \ell_T(-v)=-\ell_T(v)
    \qquad
    \text{for every }v\in\partial B_r^d.
$$
This is the antipodal boundary condition needed for the topological argument.

These labels also have a direct metric interpretation. If two points $u$ and $v$ have opposite labels, then the lifted $u$--$v$ tree path contains an odd number of special edges. By alternately reflecting the portions of the path after successive special edges, one can remove all special edges without changing its weighted length, obtaining an ordinary path from $u$ to $-v$. When $u$ and $v$ are nearby lattice points, $u$ and $-v$ lie far apart across the ball, so this ordinary path, and hence the $u$--$v$ path in $T$, must be long. Thus, for $k$ trees $T_1,\ldots,T_k$, the desired outcome is a pair of vertices in the same small simplex satisfying
$$
    \ell_{T_j}(u)=-\ell_{T_j}(v)
    \qquad
    \text{for every }j\in[k].
$$

The difficulty is that forcing every coordinate to change somewhere in a simplex does not guarantee that one pair changes in every coordinate. For example, when $k=3$, consider the multiset
$$
    \bigl\{
        (1,1,1)\times 5,\,
        (1,-1,-1),\,
        (-1,-1,1),\,
        (-1,1,-1)
    \bigr\}.
$$
Every coordinate takes both values $+1$ and $-1$, but the multiset contains no pair of opposite vectors.

To obtain a pair with opposite joint labels, \cite{CTX26} applies Tucker's Lemma.

\begin{theorem}[Tucker's Lemma~\cite{Tucker_lemma,Freund1981ACP}]
\label{thm:tucker_overview}
Let $\Gamma$ be a triangulation of a $D$-dimensional ball $B^D$ whose boundary triangulation is antipodally symmetric.

Suppose that $\lambda\colon V(\Gamma)\to\{-D,\ldots,-1,1,\ldots,D\}$ satisfies $\lambda(-v)=-\lambda(v)$ for every boundary vertex $v$. Then $\Gamma$ contains an edge $\{u,v\}$ such that $\lambda(u)=-\lambda(v)$.
\end{theorem}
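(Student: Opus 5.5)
We prove Tucker's Lemma by contradiction, via a parity (path-following) argument in the style of Freund and Todd. Assume no edge of $\Gamma$ is \emph{complementary}, i.e.\ no edge $\{u,v\}$ has $\lambda(u)=-\lambda(v)$.

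First we reduce to a convenient triangulation. The standard route, following \cite{Freund1981ACP}, is to assume $\Gamma$ refines the hyperoctahedral subdivision of $B^D$. This is the subdivision of the cross-polytope into the orthant cones, so that every simplex lies in one orthant. A general antipodally symmetric triangulation can be replaced by such a refinement, keeping the boundary symmetric and extending $\lambda$ to the new vertices; a complementary edge of the refinement must then be traced back to one in $\Gamma$. Alternatively, one can first derive the continuous Borsuk--Ulam theorem from the simplicial statement on this special triangulation, and then recover Tucker for arbitrary $\Gamma$ by a piecewise-linear map argument. I would take the first route where possible.

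For each sign vector, i.e.\ each face of the cross-polytope determined by a set $S\subseteq\{\pm1,\ldots,\pm D\}$ containing no complementary pair, consider the simplices of $\Gamma$ lying in the corresponding orthant face of dimension $|S|$. Call such a simplex $\sigma$ \emph{happy} if $\lambda(\sigma)\supseteq S$. Then form a graph whose nodes are the happy simplices. Two happy simplices are adjacent if one is a facet of the other and the facet is fully labelled for the smaller sign set, or if they are neighbouring simplices of the same dimension sharing a facet whose labels cover $S$.

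The core step is a degree count. The origin, as a $0$-simplex with $S=\emptyset$, has degree $1$. Every other happy simplex has degree $2$, except in two situations. The first is a simplex carrying a complementary pair, which is excluded by assumption. The second is a happy simplex on the boundary $\partial B^D$. At the boundary, the antipodal condition $\lambda(-v)=-\lambda(v)$ together with the symmetry of $\Gamma$ lets us pair each boundary endpoint $\sigma$ with $-\sigma$, whose sign set is $-S$. We glue these two endpoints, so that the path continues through the antipodal copy and they no longer count as endpoints. The resulting finite graph then has exactly one vertex of odd degree, which contradicts the handshake lemma. Hence a complementary edge exists.

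The main obstacle is the degree-$2$ verification: one must check, case by case, that each happy simplex has exactly two neighbours. The cases are (a) the labels exceed $S$ by one new label $\pm i$, which either moves the simplex into a higher orthant face or creates a complementary pair, and (b) a repeated label gives two facets, each of which is a door. The boundary identification must also be checked so that it does not create odd-degree nodes. I would carry out this bookkeeping carefully, following Freund--Todd's proof.
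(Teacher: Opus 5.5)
The paper gives no proof of this lemma; it is quoted from Tucker and Freund--Todd. Your plan is exactly that cited argument: reduce to a triangulation refining the orthant subdivision, run the Freund--Todd path following with antipodal gluing on $\partial B^D$, and conclude by parity. However, the graph you actually specify is not the Freund--Todd graph, and for it the local degree count you rely on does not hold.

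\textbf{Sign sets.} The sign set attached to a node must be $S(\sigma)$, the sign pattern of the relative interior of $\sigma$. It cannot be any orthant face whose closure contains $\sigma$; otherwise the origin, for instance, is a node for several $S$.

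\textbf{Facet rule.} A facet $\rho$ of $\tau$ should be joined to $\tau$ iff $\lambda(\rho)\supseteq S(\tau)$, i.e.\ $\rho$ is fully labelled for the \emph{larger} sign set. Being fully labelled for the smaller set $S(\rho)$ is just the happiness of $\rho$. Under your rule, the origin with $\lambda(0)=+1$ is also adjacent to the edge on the $+e_2$ axis whose far endpoint is labelled $+2$. No complementary pair is involved, yet the origin no longer has degree $1$.

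\textbf{Same-dimension rule.} Once the tight $(k-1)$-simplices are nodes (which they must be for the boundary gluing), your extra rule joining two same-dimensional simplices across a door double counts. A $k$-simplex with one interior door and one door on a coordinate hyperplane then acquires three neighbours. Drop that rule and let paths pass through the door nodes; the only other edges are $\sigma\sim-\sigma$ for happy $\sigma\subseteq\partial B^D$.

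With these corrections your cases close:
\begin{itemize}
\item a tight simplex has two cofacets in its orthant face, or one cofacet plus its antipode;
\item a repeated label gives two doors;
\item a new label $x$ gives one door plus the unique cofacet in the orthant face $S\cup\{x\}$, unless $-x\in S$, which is a complementary edge.
\end{itemize}

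\textbf{The reduction.} For the general case, the missing piece is how to label the new vertices.
\begin{itemize}
\item Take the barycentric subdivision of the cells $\sigma\cap O$, with $\sigma\in\Gamma$ and $O$ a closed orthant face. This refines both structures and is antipodally symmetric on the boundary.
\item Give each new vertex $w$ the label of a vertex $v$ of its carrier simplex in $\Gamma$, using $-v$ for $-w$ on the boundary.
\item Every edge of the refinement lies in one simplex of $\Gamma$ containing both carriers. So a complementary edge of the refinement yields vertices $v\neq v'$ of a single simplex of $\Gamma$ with $\lambda(v)=-\lambda(v')$, i.e.\ a complementary edge of $\Gamma$.
\end{itemize}
Your alternative route also works and avoids the refinement: the special case gives Borsuk--Ulam, and Borsuk--Ulam gives the general case via the piecewise-linear map $v\mapsto \mathrm{sign}(\lambda(v))\,e_{|\lambda(v)|}$, whose zero forces a complementary pair in one simplex.
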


There are $2^{k-1}$ antipodal pairs in $\{-1,+1\}^k$. This exponential number reflects a special feature of binary labels: every joint label has a unique label that differs from it in every coordinate, namely its antipode. Equivalently, the desired-pair relation forms a perfect matching with $2^{k-1}$ edges. By treating each such edge as one pair of Tucker labels, \cite{CTX26} applies \Cref{thm:tucker_overview} in dimension $2^{k-1}$. This directly produces two nearby vertices whose joint labels are opposite, but the exponential dimension leads to the exponent $1/2^{k-1}$.

\paragraph{A Balanced-Simplex Viewpoint.}
Our starting observation is that the same labeling admits a different interpretation through the following equivalent ball formulation of the Borsuk--Ulam Theorem.

\begin{theorem}[Borsuk–Ulam Theorem~\cite{Borsuk_Ulam}]
\label{thm:BU_overview}
For every continuous map $F\colon B^D\to\R^D$ satisfying $F(-x)=-F(x)$ on $\partial B^D \cong S^{D-1}$, there exists $x^* \in B^D$ such that $F(x^*) = 0$.
\end{theorem}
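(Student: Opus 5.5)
The plan is to derive \Cref{thm:BU_overview} directly from Tucker's Lemma (\Cref{thm:tucker_overview}), arguing by contradiction together with a discretization. Suppose $F\colon B^D\to\R^D$ is continuous, odd on $\partial B^D$, and has no zero. Since $B^D$ is compact and $\|F(\cdot)\|_\infty$ is continuous and positive, there is $\epsilon>0$ with $\|F(x)\|_\infty\ge\epsilon$ for all $x\in B^D$. By uniform continuity there is $\delta>0$ such that $\|x-y\|<\delta$ implies $\|F(x)-F(y)\|_\infty<\epsilon$.

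Next I fix a triangulation $\Gamma$ of $B^D$ whose boundary triangulation is antipodally symmetric and whose mesh is less than $\delta$. For concreteness, take the ball to be the cross-polytope, with the standard octahedral subdivision iterated barycentrically. For each vertex $v$, let $i(v)$ be the smallest index $i$ maximizing $|F_i(v)|$. Define the label
$$
    \lambda(v):=\operatorname{sign}\bigl(F_{i(v)}(v)\bigr)\cdot i(v)\in\{\pm1,\ldots,\pm D\}.
$$
This is well defined because $F_{i(v)}(v)\neq 0$, as $\|F(v)\|_\infty\ge\epsilon$. On boundary vertices, $F(-v)=-F(v)$ gives $|F_i(-v)|=|F_i(v)|$ for every $i$. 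The tie-breaking rule is therefore the same at $v$ and $-v$, so $i(-v)=i(v)$, while the sign flips. Hence $\lambda(-v)=-\lambda(v)$, and the hypotheses of \Cref{thm:tucker_overview} hold.

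Tucker's Lemma then yields an edge $\{u,v\}$ of $\Gamma$ with $\lambda(u)=-\lambda(v)$. Writing $i=|\lambda(u)|$, we may assume $F_i(u)=\|F(u)\|_\infty\ge\epsilon$ and $F_i(v)=-\|F(v)\|_\infty\le-\epsilon$, so $|F_i(u)-F_i(v)|\ge 2\epsilon$. But $\|u-v\|<\delta$, so $\|F(u)-F(v)\|_\infty<\epsilon$, a contradiction. Thus $F$ has a zero.

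The main point needing care is the consistency of the label on the boundary, namely that ties in $\arg\max$ are broken identically at $v$ and $-v$. Choosing the smallest index handles this, since absolute values agree at antipodes. The existence of a fine, antipodally symmetric triangulation is standard. As an alternative route, one could extend $F$ to an odd map on $S^D$ by setting $g(x,t)=F(x)$ for $t\ge0$ and $g(x,t)=-F(-x)$ for $t<0$. These two formulas agree on the equator by oddness, so one can then invoke the sphere version. The Tucker route, however, keeps the argument self-contained given the earlier statement.
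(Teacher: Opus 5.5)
The paper gives no proof of this statement. It quotes the Borsuk--Ulam theorem as a classical result and uses it only for intuition in \Cref{sec:technical_overview}; the actual argument in \Cref{sec:proof_of_main_thm} invokes the equivariant zero theorem \Cref{thm:MV_G_zero} instead. Your proposal therefore has to stand on its own, and it does. It is the standard textbook reduction of Borsuk--Ulam to Tucker's Lemma, as in Matou\v{s}ek's book. The following steps are all correct:
\begin{itemize}
\item the uniform bound $\|F\|_\infty\ge\epsilon$;
\item the smallest-index argmax labeling, which is antipodal on the boundary because $|F_i(-v)|=|F_i(v)|$ forces the same tie-break;
\item the $2\epsilon$ jump in one coordinate across a complementary edge of length less than $\delta$.
\end{itemize}

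Two small points each deserve a sentence in your write-up.
\begin{enumerate}
\item You triangulate the cross-polytope, but the statement is about $B^D$. Transfer via the odd radial homeomorphism $h(x)=(\|x\|_1/\|x\|_2)\,x$ with $h(0)=0$. It maps the cross-polytope onto the Euclidean ball and boundary onto boundary, so $F\circ h$ satisfies the same hypotheses and its zeros give zeros of $F$.
\item The reduction is non-circular only if Tucker's Lemma comes from a combinatorial proof such as Freund--Todd's. Such proofs are usually stated for triangulations of the cross-polytope that refine its natural orthant triangulation. Your iterated barycentric subdivision of the octahedral triangulation has this property. It is also centrally symmetric, since barycenters commute with $x\mapsto -x$.
\end{enumerate}

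Your argument runs in the opposite direction from the way the paper uses these tools. The paper goes from a continuous zero theorem to a combinatorial conclusion: extend the labels affinely, take a zero, and read off a balanced simplex. You go from the combinatorial lemma to the continuous zero. Your alternative route through the odd extension to $S^D$ is also correct.
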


Consider a triangulated $k$-dimensional ball whose vertices are labeled by vectors in $\{-1,+1\}^k$, with antipodal labels on the boundary. Map every triangulation vertex to its label vector and extend the map affinely over each simplex. By \Cref{thm:BU_overview}, the resulting map has a zero.

Suppose that this zero lies in a simplex $\sigma$ and has barycentric coordinates $\{\lambda_v\}_{v\in V(\sigma)}$. Then
$$
    \sum_{v\in V(\sigma)}
    \lambda_v\ell(v)=0.
$$
Consequently, for every coordinate $j\in[k]$,
$$
    \sum_{\ell_j(v)=1}\lambda_v
    =
    \sum_{\ell_j(v)=-1}\lambda_v
    =
    \frac12.
$$
Thus, the simplex is balanced in every coordinate.

However, even exact coordinate-wise balance is not enough. For $k=3$, consider the four label vectors
$$
\big\{
(1,1,1),\,
(1,-1,-1),\,
(-1,1,-1),\,
(-1,-1,1)
\big\}.
$$
Every coordinate is perfectly balanced between $+1$ and $-1$. Nevertheless, they contain no pair of opposite vectors. Thus, coordinate-wise balance does not directly yield the desired pair. What it does imply, through a simple averaging argument, is only that some pair differs in at least half of the coordinates.

Indeed, if two vertices $U$ and $V$ are sampled independently from $V(\sigma)$ according to the weights $\{\lambda_v\}$, then for every coordinate $j$,
$$
    \Pr[\ell_j(U)\ne\ell_j(V)]
    =
    \frac12.
$$
Therefore,
$$
    \E\bigl[
        |\{j\in[k]:\ell_j(U)\ne\ell_j(V)\}|
    \bigr]
    =
    \frac{k}{2}.
$$
Consequently, there exists a pair differing in at least $k/2$ coordinates, whereas we need a pair differing in all $k$ coordinates.

\paragraph{From Antipodal Symmetry to Cyclic Symmetry.}
We overcome this obstruction by replacing the antipodal $\Z_2$-symmetry with a cyclic $\Z_p$-symmetry, where $p$ is the smallest prime strictly larger than $k$. The crucial difference is that every joint label in $\Z_p^k$ has $(p-1)^k$ partners that differ from it in every coordinate, rather than a unique prescribed antipode. This flexibility allows us to seek any pair avoiding all $k$ coordinate-wise collisions, without encoding all compatible pairs separately.

We implement this idea by introducing a transformation $g$ of order $p$ and assigning each tree a label in $\Z_p$, with the boundary condition
$$
\ell_T(gx)=\ell_T(x)+1
\pmod p.
$$
Instead of identifying the two antipodal points $x$ and $-x$, the hard instance identifies the entire cyclic orbit
$$
x,gx,\ldots,g^{p-1}x.
$$

The relevant extension of the Borsuk–Ulam principle concerns equivariant maps. In our setting, a map $F$ is \emph{$\Z_p$-equivariant} if $F(gx)=gF(x)$ for every $x$.

Dold established a general equivariant Borsuk–Ulam Theorem for free finite-group actions~\cite{Dold83}; several later variants give convenient zero-point formulations for different group actions, with their applications in combinatorics (e.g.~\cite{HANKE2009404,Matousek2004,meunier2006az,MEUNIER201414,MV_G_topo,Ziegler2002}).

A specialized form relevant to our proof is the following.

\begin{theorem}[$\Z_p$-Equivariant Zero Principle]
\label{thm:cyclic_BU_overview}
Let $g$ generate an action of $\Z_p$ on a $D$-dimensional ball $B^D$ and on $\R^D$. Suppose that the action is free on $\partial B^D$ and on $\R^D\setminus\{0\}$, while $0$ is fixed. If a continuous map $F\colon B^D\to\R^D$ is $\Z_p$-equivariant on $\partial B^D$, then there exists $x^*\in B^D$ such that $F(x^*)=0$.
\end{theorem}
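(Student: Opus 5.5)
Suppose, for contradiction, that $F(x)\neq 0$ for every $x\in B^D$. The plan is to turn this into a forbidden equivariant map between spheres of the same dimension and then invoke Dold's theorem~\cite{Dold83}.

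\textbf{Step 1 (normalize).} Define $\widetilde F\colon B^D\to S^{D-1}$ by $\widetilde F(x)=F(x)/\|F(x)\|$. For equivariance we need the action on $\R^D$ to commute with radial rescaling. So we first replace the norm by a $g$-invariant one, namely the averaged norm $\|y\|_g:=\sum_{i=0}^{p-1}\|g^iy\|$, and we take the action on $\R^D$ to be linear (or linearizable) fixing $0$, as in the paper's application. Then the unit sphere $S_g:=\{\|y\|_g=1\}$ is $g$-invariant, homeomorphic to $S^{D-1}$, and carries a free $\Z_p$-action, because the action is free on $\R^D\setminus\{0\}$. The restriction $f:=\widetilde F|_{\partial B^D}\colon \partial B^D\to S_g$ is then $\Z_p$-equivariant, since $F$ is equivariant on $\partial B^D$ and the normalization commutes with $g$.

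\textbf{Step 2 (null-homotopy).} Because $\widetilde F$ is defined on all of $B^D$, the map $f$ extends over the ball and is therefore null-homotopic as a map $S^{D-1}\to S^{D-1}$. In particular, $\deg f=0$.

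\textbf{Step 3 (contradiction via Dold / degree congruence).} We use the classical fact that an equivariant map between free $\Z_p$-spheres of the same dimension $D-1$ has degree $\equiv 1$ modulo $p$, up to a unit factor determined by the two actions. This is the quantitative form of Dold's theorem; it can be proved with transfer or Smith-theory arguments, or with equivariant obstruction theory on a free $\Z_p$-CW structure. Combined with Step 2, it gives $0\not\equiv 0 \pmod p$, a contradiction. Alternatively, one can quote Dold's theorem directly: there is no equivariant map from a free $(D-1)$-connected... In the form we need, Dold says there is no $\Z_p$-map $S^{D-1}\to S^{D-2}$-dimensional free space. To apply it, extend $f$ equivariantly over the cone, or glue two copies of the ball along $\partial B^D$, to get an equivariant map from a free $(D-1)$-connected complex of dimension $D$ into $S_g$, which is $(D-1)$-dimensional. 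That is impossible by Dold.

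\textbf{Main obstacle.} The hard step is Step 3, specifically getting the action on the interior of $B^D$ into a usable form. The action is free only on $\partial B^D$, so it is cleaner to work with $\partial B^D$ and the degree argument than with the whole ball. The point I would check most carefully is that the degree congruence holds for arbitrary free $\Z_p$-actions on the two spheres, not only for standard rotations. I would handle this by citing Dold's theorem on equivariant maps between free $G$-spaces, or the standard result that such maps have degree prime to $p$ (tom Dieck, \emph{Transformation Groups}). Since $0$ is not prime to $p$, this contradicts Step 2.
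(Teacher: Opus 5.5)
Your main line (Steps 1--3, via the degree congruence) is correct, but it follows a different route from the paper. The paper gives no argument for this statement. It treats it as the special case $M=B^D$, $G=\Z_p$ of the Musin--Volovikov zero theorem, \Cref{thm:MV_G_zero}, a zero-point variant of Dold's theorem. You instead reconstruct the mechanism behind such results. A zero-free $F$ yields an equivariant map from the free $\Z_p$-sphere $\partial B^D$ to a free $\Z_p$-sphere of the same dimension. That map is null-homotopic because it extends over the ball, yet equivariance forces its degree to be prime to $p$. The degree fact is the real content, and it holds for arbitrary free actions, not just rotations. Let $X$ be a free $\Z_p$-space that is a mod-$p$ homology $(D-1)$-sphere whose orbit space has no $\mathbb{F}_p$-cohomology in degrees $\ge D$. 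In its Cartan--Leray spectral sequence the transgression $H^{D-1}(X;\mathbb{F}_p)\to H^{D}(\Z_p;\mathbb{F}_p)$ must be an isomorphism. Naturality under an equivariant $f\colon X\to Y$ then makes $f^*$ an isomorphism on $H^{D-1}(\,\cdot\,;\mathbb{F}_p)$. Your route buys a self-contained explanation of where freeness on both sides is used. The citation buys full generality (any finite group, any compact orientable $M$ with spherical boundary) at no cost.

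Two corrections.

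First, the normalization in Step 1 needs a linear action on $\R^D$, which the statement does not assume. You can drop it by taking $\R^D\setminus\{0\}$ itself as the target. Its orbit space is a noncompact connected $D$-manifold, so its $\mathbb{F}_p$-cohomology vanishes in degrees $\ge D$, and the same transgression argument applies.

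Second, the ``alternatively'' sketch is wrong as written. An \emph{equivariant} extension of $f$ over the cone would have to send the cone point to a fixed point of the free space $S_g$. Gluing two copies of the ball yields a $\Z_p$-space only when $p=2$. What works is gluing $p$ copies: $X:=(\Z_p\times B^D)/{\sim}$, where $(h,x)\sim(h',x')$ iff $x,x'\in\partial B^D$ and $hx=h'x'$, with $\Z_p$ acting on the first factor. This $X$ is free by freeness on $\partial B^D$. It is homotopy equivalent to a wedge of $p-1$ copies of $S^D$, hence $(D-1)$-connected. By boundary equivariance, $(h,x)\mapsto h\widetilde F(x)$ is a well-defined equivariant map $X\to S_g$, and Dold's theorem gives the contradiction since $\dim S_g=D-1$.
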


We encode the $p$ possible labels as the vertices of a centered $(p-1)$-dimensional simplex. For $k$ trees, the resulting target space has dimension $D=k(p-1)$. After extending the vertex labels affinely over a suitable $g$-invariant triangulation, the boundary relation on the labels implies that the resulting map is $\Z_p$-equivariant. By \Cref{thm:cyclic_BU_overview}, it has a zero.

As before, the zero lies in a simplex $\sigma$ and determines barycentric weights $\{\lambda_v\}_{v\in V(\sigma)}$. This time, however, the balance is uniform over all $p$ labels:
$$
    \sum_{\ell_{T_j}(v)=a} \lambda_v
    =
    \frac1p
$$
for every tree $T_j$ and every $a\in\Z_p$.

If two vertices $U$ and $V$ are sampled independently according to these weights, then
$$
    \Pr[
        \ell_{T_j}(U)=\ell_{T_j}(V)
    ]
    =
    \frac1p
$$
for every $j$. Then by the union bound,
$$
    \Pr\bigl[
        \exists j\in[k]:
        \ell_{T_j}(U)=\ell_{T_j}(V)
    \bigr]
    \le
    \frac{k}{p}
    <
    1.
$$
Hence, there exist two vertices $u$ and $v$ in the same simplex such that
$$
    \ell_{T_j}(u)\ne\ell_{T_j}(v)
    \qquad
    \text{for every }j\in[k].
$$

The same metric principle as in \cite{CTX26} then shows that different labels force a large distance in the corresponding tree, while membership in the same bounded simplex keeps the two vertices close in the original metric. The essential improvement is that the required dimension is now
$$
    k(p-1)=O(k^2),
$$
rather than $2^{k-1}$.
\section{Preliminaries}\label{sec:preli}

\paragraph{Basic Notations.}
For a positive integer $n$, let $[n] := \{1,\ldots,n\}$. A subscript in an asymptotic notation indicates that the subscripted parameters are treated as constants.

For an edge-weighted graph $G = (V(G),E(G),w_G)$, where $w_G \colon E(G) \to \R_{\ge 0}$, and a path $P = (v_0,v_1,\ldots,v_t)$ in $G$, we define its length by
$$
    \len(P) := \sum_{i=1}^t w_G(\{v_{i-1},v_i\}).
$$
The reverse path of $P$ is denoted by $P^{-1} := (v_t,v_{t-1},\ldots,v_0)$. Clearly, $\len(P^{-1}) = \len(P)$.

For a finite sequence $A = (a_1,\ldots,a_m)$ of positive integers, we write
$$
    \lcm(A) := \lcm(a_1,\ldots,a_m)
$$
for their least common multiple.

\paragraph{Group Actions.}
Let $G$ be a finite group with identity element $e$, and let $X$ be a topological space. An action of $G$ on $X$ is a map $(g,x) \mapsto g \cdot x$ satisfying $e \cdot x = x$ and $g \cdot (h \cdot x) = (gh) \cdot x$ for every $g,h \in G$ and $x \in X$. We always assume that the action map is continuous.

The action is \emph{free} if for every $g \in G$ and $x \in X$, $g \cdot x = x$ implies $g = e$. If $G$ acts on both $X$ and $Y$, a map $f \colon X \to Y$ is \emph{$G$-equivariant} if $f(g \cdot x) = g \cdot f(x)$ for every $g \in G$ and $x \in X$.

\paragraph{Simplices and Triangulations.}
A $d$-dimensional simplex is the convex hull $\sigma = \conv\{v_0,\ldots,v_d\}$ of $d+1$ affinely independent points. The set $V(\sigma) := \{v_0,\ldots,v_d\}$ is $\sigma$'s vertex set, and a face of $\sigma$ is the convex hull of a subset of its vertices.

\begin{definition}[Triangulation]
Let $X$ be a subset of $\R^d$ that is homeomorphic to a closed Euclidean ball. A finite collection $\Gamma$ of simplices is a triangulation of $X$ if $\bigcup_{\sigma \in \Gamma}\sigma = X$, and:
\begin{itemize}
    \item every face of every $\sigma \in \Gamma$ also belongs to $\Gamma$;
    \item for every $\sigma_1,\sigma_2 \in \Gamma$, the intersection $\sigma_1 \cap \sigma_2$ is either empty or a face of both $\sigma_1$ and $\sigma_2$.
\end{itemize}

We write $V(\Gamma) = \bigcup_{\sigma \in \Gamma}V(\sigma)$ for the vertex set of $\Gamma$, and $\partial \Gamma := \{\sigma \in \Gamma : \sigma \subseteq \partial X\}$ for the boundary of $\Gamma$.
\end{definition}

\paragraph{An Equivariant Zero Theorem.}
We will use the following equivariant zero theorem. We only apply it to domains homeomorphic to a closed Euclidean ball, which are compact, connected, orientable manifolds with boundary.

\begin{theorem}[{\cite[Corollary~5.1]{MV_G_topo}}]
\label{thm:MV_G_zero}
Let $M$ be a compact, connected, orientable $D$-dimensional manifold with boundary $\partial M$ which is homeomorphic to the sphere $S^{D-1}$. Suppose that a finite group $G$ acts freely on $\partial M$, and also acts on $\R^D$ such that the origin is fixed by the entire group, i.e., $g \cdot 0 = 0$ for every $g \in G$, while the action on $\R^D \setminus \{0\}$ is free.

Then, for every continuous map $f \colon M \to \R^D$ whose restriction $f|_{\partial M}$ to the boundary is $G$-equivariant, there exists $x^* \in M$ such that $f(x^*) = 0$.
\end{theorem}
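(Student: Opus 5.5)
We argue by contradiction. Suppose $f(x)\neq 0$ for every $x\in M$. Then $f$ is a map $M\to \R^D\setminus\{0\}$, and its restriction $h:=f|_{\partial M}\colon \partial M\to \R^D\setminus\{0\}$ is $G$-equivariant. Here $\partial M\cong S^{D-1}$ and $\R^D\setminus\{0\}\simeq S^{D-1}$, and $G$ acts freely on both. The plan is to compute the degree $h_*\colon H_{D-1}(\partial M)\to H_{D-1}(\R^D\setminus\{0\})\cong\Z$ in two incompatible ways: the extension over $M$ forces it to be $0$, while equivariance forces it to be nonzero modulo some prime.

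\emph{Step 1 (the extension kills the degree).} Since $M$ is compact, connected and orientable, it has a fundamental class $[M,\partial M]\in H_D(M,\partial M;\Z)$. The connecting homomorphism of the pair sends this class to the fundamental class $[\partial M]$. Hence $[\partial M]$ maps to $0$ under the inclusion $H_{D-1}(\partial M)\to H_{D-1}(M)$. Because $h$ factors through $M$ via $f$, we get $h_*[\partial M]=0$, so $\deg h=0$. The same holds with $\mathbb{F}_p$ coefficients for every prime $p$.

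\emph{Step 2 (reduce to prime order).} If $G$ is trivial there is nothing to prove, since then a map $\partial M\to S^{D-1}$ extending over a ball is fine. So I expect the argument to use $G\neq\{e\}$, which is implicit in applying the theorem with a free action. Pick an element of prime order $p$ and let $H\cong\Z_p$ be the subgroup it generates. Restricting both actions to $H$ keeps them free, and $h$ remains $H$-equivariant. So it suffices to treat $G=\Z_p$ and to show that $\deg h\not\equiv 0\pmod p$.

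\emph{Step 3 (equivariance forces nonzero degree mod $p$; the main obstacle).} I would use Borel/orbit-space cohomology with $\mathbb{F}_p$ coefficients. Since the actions are free, $X:=\partial M$ and $Y:=\R^D\setminus\{0\}$ (or an equivariant deformation retract of it onto a compact invariant subspace, if needed) are principal $\Z_p$-bundles over $X/\Z_p$ and $Y/\Z_p$. The equivariant map $h$ induces $\bar h\colon X/\Z_p\to Y/\Z_p$, compatible with the classifying maps to $B\Z_p$. For a free $\Z_p$-space homotopy equivalent to $S^{D-1}$, the Serre (or Gysin-type) spectral sequence of $X\to X/\Z_p\to B\Z_p$ shows two things. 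First, $H^i(B\Z_p;\mathbb{F}_p)\to H^i(X/\Z_p;\mathbb{F}_p)$ is an isomorphism for $i\le D-1$. Second, the nonzero class in degree $D-1$ is detected by the transfer to $H^{D-1}(X;\mathbb{F}_p)$ as a nonzero multiple of the generator; the same holds for $Y$.

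By naturality, $\bar h^*$ is an isomorphism on $H^{D-1}(\,\cdot\,/\Z_p;\mathbb{F}_p)$. Commuting this with the transfer then gives that $h^*\colon H^{D-1}(Y;\mathbb{F}_p)\to H^{D-1}(X;\mathbb{F}_p)$ is nonzero, i.e.\ $\deg h\not\equiv0\pmod p$. This contradicts Step 1 and yields the desired zero $x^*$.

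Step 3 is the delicate part, for two reasons. The action on $\R^D$ is not assumed linear, so one must work with $\R^D\setminus\{0\}$ directly, or with a compact invariant deformation retract of it, rather than radially projecting to a sphere. One also has to check the spectral-sequence/transfer identification carefully. As an alternative for Step 3, I would fall back on Dold's theorem~\cite{Dold83} in the form ``a $G$-map between free $G$-spaces that are $(D-2)$-connected and $(D-1)$-dimensional in the homological sense has nonzero degree mod $|G|$ on some prime-order subgroup.'' Combined with Steps 1--2, this gives the result.
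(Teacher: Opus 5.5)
The paper does not prove this statement. It quotes it from Musin--Volovikov~\cite{MV_G_topo} and uses it only for $M=B_r$ with the linear action generated by $g$. So your argument supplies what the paper takes on faith. Its structure is the standard one and it works: an extendable boundary map has degree $0$, whereas an equivariant map between free $\Z_p$-spaces homotopy equivalent to $S^{D-1}$ has degree prime to $p$. Steps~1 and~2 are fine, with one correction. For $G=\{e\}$ the statement is not vacuous but false (take $f$ constant and nonzero), so $G\neq\{e\}$ must be assumed. This is harmless in the paper.

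In Step~3 one identification must be reversed. With $\mathbb{F}_p$ coefficients, the map $H^{D-1}(X/\Z_p)\to H^{D-1}(X)$ induced by the projection is zero. The transgression $E^{0,D-1}\to E^{D,0}$ is an isomorphism, so $E_\infty^{0,D-1}=0$; for $S^1\to S^1/\Z_p$ the map is multiplication by $p$. So nothing is detected in $H^{D-1}(X;\mathbb{F}_p)$ that way.

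Use instead the transfer $\tau\colon H^{D-1}(X;\mathbb{F}_p)\to H^{D-1}(X/\Z_p;\mathbb{F}_p)$, induced by the norm $N=\sum_s(g^s)^*$. The singular cochains $C$ of a free $\Z_p$-space are products of copies of $\mathbb{F}_p[\Z_p]$. Hence the sequences $0\to C^{\Z_p}\to C\xrightarrow{1-g^*}(1-g^*)C\to0$ and $0\to(1-g^*)C\to C\xrightarrow{N}C^{\Z_p}\to0$ are exact, where $C^{\Z_p}$ is the cochain complex of the orbit space.

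Now take $Y=\R^D\setminus\{0\}$. A connected noncompact $D$-manifold has no mod-$p$ cohomology in degrees $\ge D$, so $H^D(Y)=0$ and $H^{D+1}(Y/\Z_p)=0$. The first sequence then gives $H^D((1-g^*)C)=0$, and the second shows $\tau_Y$ maps onto $H^{D-1}(Y/\Z_p)\cong\mathbb{F}_p$. Naturality $\tau_Xh^*=\bar h^*\tau_Y$, together with your fact that $\bar h^*$ is an isomorphism in degree $D-1$, gives $h^*\neq0$, i.e.\ $\deg h\not\equiv0\pmod p$. This works directly on $\R^D\setminus\{0\}$, so you need neither a compact invariant retract nor the Dold fallback.

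For the paper's purposes there is a shortcut. Glue $p$ copies of $B_r$ along $\partial B_r$ via $(s,x)\sim(0,g^sx)$. This gives a free, $(D-1)$-connected $\Z_p$-space $\simeq\bigvee^{p-1}S^D$. If $F$ had no zero, $[s,x]\mapsto g^sF(x)/\rho(F(x))$ would be an equivariant map into the free $(D-1)$-dimensional $\Z_p$-space $\{\rho=1\}$, contradicting Dold's theorem. Your degree argument needs more machinery, but it covers arbitrary orientable $M$ and nonlinear actions, which is what the cited statement asserts.
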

\section{Proof of \Cref{thm:main_thm}}
\label{sec:proof_of_main_thm}

Fix $k \ge 8$, and let $p$ be the smallest prime strictly larger than $k$. Set $D := k(p-1)$. All constants in this section may depend on $k$ and $p$, but not on the radius parameter $r$. $n$ will be a sufficiently large parameter.

\subsection{Construction of the Hard Instance}

We identify $\R^D$ with $(\R^{p-1})^k$. Thus, a point $x \in \R^D$ is written as
$$
    x = (x_{j,a})_{\substack{j \in [k]\\1 \le a < p}}.
$$
For each $j \in [k]$, we complete the $(p-1)$ coordinates in the $j$-th block to a zero-sum $p$-tuple by setting
$$
    \widehat{x}_{j,0} := -\sum_{a=1}^{p-1}x_{j,a},
    \qquad
    \widehat{x}_{j,a} := x_{j,a}
    \quad\text{for every }1 \le a < p.
$$
Hence $\sum_{a=0}^{p-1}\widehat{x}_{j,a}=0$ for every $j \in [k]$.

Define a linear map $g \colon \R^D \to \R^D$ by cyclically shifting the completed coordinates:
$$
    \widehat{gx}_{j,a} := \widehat{x}_{j,a-1}
    \quad
    \text{for every }j \in [k]\text{ and }a \in \Z_p,
$$
where the subscripts are taken modulo $p$. Equivalently, on each $(p-1)$-dimensional block,
$$
    g(z_1,\ldots,z_{p-1})
    =
    \left(
        -\sum_{a=1}^{p-1}z_a,\,
        z_1,\ldots,z_{p-2}
    \right).
$$
In particular, $g^p = \text{id}$, and $g(\Z^D) = \Z^D$.

Define
$$
    \rho(x)
    :=
    \frac12
    \sum_{j=1}^k\sum_{a=0}^{p-1}
    |\widehat{x}_{j,a}|
    =
    \frac12
    \sum_{j=1}^k
    \left(
        \left|\sum_{a=1}^{p-1}x_{j,a}\right|
        +
        \sum_{a=1}^{p-1}|x_{j,a}|
    \right).
$$
The map $\rho$ is a norm on $\R^D$. Moreover, it satisfies that $\rho(gx)=\rho(x)$ for every $x \in \R^D$ and $\rho(x) \in \Z$ for every $x \in \Z^D$.

For a positive integer $r$, let
$$
    B_r := \{x \in \R^D : \rho(x) \le r\},
    \qquad
    \Lambda_r := B_r \cap \Z^D.
$$
Since $\rho$ is a norm, $B_r$ is homeomorphic to the closed Euclidean $D$-ball. Its boundary is $\partial B_r=\{x \in \R^D:\rho(x)=r\}$. The transformation $g$ preserves both $B_r$ and $\partial B_r$.

Now we define an edge-weighted graph $\widetilde{G}_r$ on vertex set $\Lambda_r$. Its \emph{ordinary edge} set is
$$
    E_0
    :=
    \big\{
        \{x,y\}:
        \rho(x - y) = 1,\
        x,y \in \Lambda_r,\
        \min\{\rho(x),\rho(y)\}<r
    \big\},
$$
and every ordinary edge has weight $1$. The final condition removes all ordinary edges whose two endpoints both lie on $\partial B_r$.

For every $x \in \Lambda_r \cap \partial B_r$, we also add a \emph{special edge} $\{x,gx\}$ of weight $0$. Let $E_{\mathrm{sp}}$ denote the set of special edges. Thus,
$$
    \widetilde{G}_r
    :=
    (\Lambda_r,E_0 \cup E_{\mathrm{sp}},w_{\widetilde{G}_r}).
$$

The special edges connect each boundary orbit $\{x,gx,\ldots,g^{p-1}x\}$. We contract every connected component of special edges into a single vertex and denote the resulting weighted graph by $G_r$. Let
$$
    \pi \colon \Lambda_r \longrightarrow V(G_r)
$$
be the quotient map. Since we can show that $\widetilde{G}_r$ is connected, the shortest-path metric of $G_r$ is definable, denoted by $d_{G_r}$. The contraction is needed because distinct vertices connected by special edges have distance zero in $\widetilde{G}_r$; after contraction, $d_{G_r}$ is a valid metric.

We record a basic geometric property of the cyclic action.

\begin{lemma}\label{lem:cyclic_displacement}
For every $x \in \R^D$ and every $s \in \{1,\ldots,p-1\}$,
$$
    \rho(x-g^s x)
    \ge
    \frac{2}{p}\rho(x).
$$
\end{lemma}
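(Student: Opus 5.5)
The inequality decomposes over the $k$ blocks, so it is enough to prove it for a single block and then sum. By definition,
$$
    \rho(x-g^sx)=\frac12\sum_{j=1}^k\sum_{a\in\Z_p}\bigl|\widehat{x}_{j,a}-\widehat{x}_{j,a-s}\bigr|,
$$
because $g$ acts linearly and $\widehat{g^sx}_{j,a}=\widehat{x}_{j,a-s}$. Also, completion commutes with subtraction, since both $x\mapsto\widehat x$ and $g$ are linear. Fix a block $j$ and write $y_a:=\widehat{x}_{j,a}$ for $a\in\Z_p$. Then $y\in\R^p$ with $\sum_a y_a=0$, and it suffices to show
$$
    \sum_{a\in\Z_p}|y_a-y_{a-s}|\;\ge\;\frac{2}{p}\sum_{a\in\Z_p}|y_a|.
$$

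\textbf{Relabelling along one cycle.} Since $p$ is prime and $1\le s\le p-1$, the element $s$ generates $\Z_p$. Hence the shift $a\mapsto a-s$ is a single $p$-cycle on $\Z_p$. Relabel $z_i:=y_{is}$ for $i\in\Z_p$. The left-hand side becomes the cyclic total variation $\sum_{i\in\Z_p}|z_i-z_{i-1}|$ of the sequence $z_0,\dots,z_{p-1}$ around a closed cycle. The right-hand side is unchanged, and we still have $\sum_i z_i=0$. Primality is used exactly here: without it, the shift would split into several shorter cycles, and the argument below would have to be applied to each cycle separately.

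\textbf{Two elementary bounds.} Let $M:=\max_i z_i$ and $m:=\min_i z_i$.
\begin{itemize}
    \item A closed walk around the cycle passes from a position attaining $m$ to one attaining $M$ and back. Each of these two arcs contributes at least $M-m$ by the triangle inequality, so the cyclic total variation is at least $2(M-m)$.
    \item The zero-sum condition forces $m\le 0\le M$. Hence $|z_i|\le M-m$ for every $i$, which gives $\sum_i|z_i|\le p(M-m)$.
\end{itemize}

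\textbf{Conclusion.} Combining the two bounds gives
$$
    \sum_i|z_i-z_{i-1}|\;\ge\;2(M-m)\;\ge\;\frac{2}{p}\sum_i|z_i|
$$
for each block. Summing over $j\in[k]$ and multiplying by $\frac12$ yields $\rho(x-g^sx)\ge\frac{2}{p}\rho(x)$. No step is a serious obstacle. The only points needing care are confirming that the completed coordinates of $x-g^sx$ are $\widehat{x}_{j,a}-\widehat{x}_{j,a-s}$, which follows from linearity, and making the single-cycle reduction explicit.
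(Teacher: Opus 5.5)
Your proof is correct and follows essentially the same argument as the paper. Both reduce to a single zero-sum block, use primality to make the shift by $s$ a single $p$-cycle, and bound the cyclic total variation below by $2(M-m)$. The only cosmetic difference is how $M-m\ge 2P/p$ is obtained: you use $|z_i|\le M-m$ for each $i$, while the paper uses $M\ge P/p$ and $-m\ge P/p$. The two bounds are equivalent.
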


\begin{proof}
Consider one completed block
$$
    z=(z_0,\ldots,z_{p-1}),
    \qquad
    \sum_{a=0}^{p-1}z_a=0.
$$
Let
$$
    P:=\frac12\sum_{a=0}^{p-1}|z_a|,
    \qquad
    M:=\max_a z_a,
    \qquad
    m:=\min_a z_a.
$$
The sum of the positive coordinates and the sum of the absolute values of the negative coordinates are both equal to $P$. Therefore,
$$
    M\ge \frac{P}{p},
    \qquad
    -m\ge \frac{P}{p},
$$
and hence
$$
    M-m\ge \frac{2P}{p}.
$$

Since $p$ is prime and $s \ne 0$, the sequence
$$
    0,s,2s,\ldots,(p-1)s
    \pmod p
$$
visits every coordinate. Along this cyclic order, the total variation is at least twice the difference between the maximum and minimum values. Thus,
$$
    \sum_{a=0}^{p-1}|z_a-z_{a-s}|
    \ge
    2(M-m).
$$
It follows that
$$
    \frac12\sum_{a=0}^{p-1}|z_a-z_{a-s}|
    \ge
    \frac{2P}{p}.
$$
Summing this inequality over the $k$ completed blocks proves the result.
\end{proof}

In particular, if $x \in \partial B_r$, then $\rho(x-g^s x)\ge 2r/p$ for every $s \ne 0$. Hence the boundary orbit of every point contains exactly $p$ distinct points.

\subsection{Definition of Labels}

Fix a dominating tree $T$ on $V(G_r)$. We first explain how to lift paths in the contracted graph $G_r$ back to $\widetilde{G}_r$.

The following consequence of \Cref{lem:cyclic_displacement} shows that every ordinary edge incident to a contracted boundary vertex determines a unique representative of the corresponding boundary orbit.

\begin{lemma}\label{lem:exclusive_incidence}
Assume $r > p$. Let $x \in \Lambda_r$ satisfy $\rho(x) = r$, and let $a \in \Lambda_r$ satisfy $\rho(a) < r$. Then $a$ is adjacent by an ordinary edge to at most one point among $x,gx,\ldots,g^{p-1}x$.
\end{lemma}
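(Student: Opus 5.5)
We argue by contradiction. Suppose $a$ is adjacent by ordinary edges to two distinct orbit points, say $g^{s}x$ and $g^{t}x$ with $s\ne t \pmod p$. By the definition of $E_0$, this means
$$
    \rho(a-g^{s}x)=1
    \qquad\text{and}\qquad
    \rho(a-g^{t}x)=1.
$$
(The condition $\min\{\rho(\cdot),\rho(\cdot)\}<r$ is automatically satisfied because $\rho(a)<r$, but it plays no role in this argument.)

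The plan is to compare this with \Cref{lem:cyclic_displacement}. By the triangle inequality for the norm $\rho$,
$$
    \rho(g^{s}x-g^{t}x)\le \rho(g^{s}x-a)+\rho(a-g^{t}x)=2.
$$
Next we rewrite the left-hand side so that the lemma applies. Since $g$ is linear and $\rho(gy)=\rho(y)$ for all $y$, we have
$$
    \rho(g^{s}x-g^{t}x)=\rho\bigl(g^{s}(x-g^{t-s}x)\bigr)=\rho(x-g^{u}x),
$$
where $u\equiv t-s \pmod p$ and $u\in\{1,\ldots,p-1\}$. Applying \Cref{lem:cyclic_displacement} with $\rho(x)=r$ gives
$$
    \rho(x-g^{u}x)\ge \frac{2r}{p}.
$$

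The hypothesis $r>p$ makes this exceed $2$, since $2r/p>2$. This contradicts the upper bound of $2$ obtained above, so $a$ is adjacent to at most one point of the orbit.

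The only point needing care is that the orbit points are genuinely indexed by $\Z_p$, so that $g^{s}x\ne g^{t}x$ corresponds to $u\not\equiv 0$. This is exactly the remark following \Cref{lem:cyclic_displacement}: boundary orbits contain exactly $p$ distinct points. There is no real obstacle beyond this, as the argument is a direct triangle-inequality comparison.
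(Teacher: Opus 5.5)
Your proof is correct and matches the paper's argument exactly: the triangle inequality through $a$ gives $\rho(g^s x-g^t x)\le 2$, while linearity, the $g$-invariance of $\rho$, and \Cref{lem:cyclic_displacement} give $\rho(x-g^{t-s}x)\ge 2r/p>2$. Your closing caveat is not needed, because $g^p=\mathrm{id}$ already forces two distinct orbit points to have exponents that differ by a nonzero residue mod $p$.
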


\begin{proof}
Suppose that $a$ were adjacent to both $g^s x$ and $g^t x$ for some distinct $s,t \in \Z_p$. Since ordinary edges have $\rho$-length $1$,
$$
    \rho(g^s x-g^t x)
    \le
    \rho(g^s x-a)+\rho(a-g^t x)
    =
    2.
$$
On the other hand, by the $g$-invariance of $\rho$ and \Cref{lem:cyclic_displacement},
$$
    \rho(g^s x-g^t x)
    =
    \rho(x-g^{t-s}x)
    \ge
    \frac{2r}{p}
    >
    2,
$$
a contradiction.
\end{proof}

For every contracted boundary vertex $C \in V(G_r)$, fix one representative
$$
    \widehat{C} \in \pi^{-1}(C).
$$
If $A \in V(G_r)$ is not a contracted boundary vertex, then $\pi^{-1}(A)$ contains a unique point, which we denote by $\widehat{A}$.

Consider a path $Q$ in $G_r$. Suppose that $Q$ contains a subpath
$$
    A \longrightarrow C \longrightarrow B,
$$
where $C$ is a contracted boundary vertex and $A,B$ are nonboundary vertices. By \Cref{lem:exclusive_incidence}, there are unique points $x,y \in \pi^{-1}(C)$ such that $\{\widehat{A},x\}$ and $\{y,\widehat{B}\}$ are ordinary edges of $\widetilde{G}_r$. There is a unique $s \in \{0,\ldots,p-1\}$ such that $y=g^s x$. And then we replace the above subpath by
$$
    \widehat{A}
    \longrightarrow x
    \longrightarrow gx
    \longrightarrow \cdots
    \longrightarrow g^s x=y
    \longrightarrow \widehat{B}.
$$
Here the path from $x$ to $g^s x$ follows the special cycle in the direction $z \longrightarrow gz$.

If $Q$ starts at a contracted boundary vertex $C$, we first move from $\widehat{C}$ along special edges in the direction $z \to gz$ until reaching the representative incident to the first ordinary edge of $Q$. If $Q$ ends at a contracted boundary vertex, we analogously move from the representative reached by the final ordinary edge to the fixed representative $\widehat{C}$. In this way, every path from $A$ to $B$ in $G_r$ has a lift from $\widehat{A}$ to $\widehat{B}$ in $\widetilde{G}_r$ with the same weighted length.

We orient every special edge $\{x,gx\}$ from $x$ to $gx$. Traversing a special edge in the direction $x \longrightarrow gx$ is called a \emph{positive traversal}, while traversing it in the reverse direction $gx \longrightarrow x$ is called a \emph{negative traversal}. For a directed path $P$ in $\widetilde{G}_r$, define its \emph{winding number} by
$$
\begin{aligned}
    \wind(P) & := \#\{\text{positive traversals of special edges}\} \\
     & - \#\{\text{negative traversals of special edges}\}
    \pmod p.
\end{aligned}
$$
Any special-edge path from $x$ to $g^s x$ has winding number $s$ modulo $p$, independently of the particular route around the special cycle.

We may assume without loss of generality that each tree edge $\{A,B\}$ has weight $d_T(A,B)=d_{G_r}(A,B)$. Indeed, decreasing every tree-edge weight to the corresponding $G_r$-distance preserves domination by the triangle inequality.

Root $T$ at
$$
    O:=\pi(0).
$$

For every parent-child tree edge $A \to B$, fix an arbitrary shortest $A$-$B$ path in $G_r$ and one lift from $\widehat{A}$ to $\widehat{B}$ as above. Denote this lifted path by
$$
    \widetilde{Q}_T(A,B).
$$
For the reverse orientation, define
$$
    \widetilde{Q}_T(B,A)
    :=
    \widetilde{Q}_T(A,B)^{-1}.
$$
For arbitrary $A,B \in V(G_r)$, let $\widetilde{P}_T(A,B)$ be the concatenation of these fixed lifted paths along the unique $A$-$B$ path in $T$. Then
$$
    \len\bigl(\widetilde{P}_T(A,B)\bigr)
    =
    d_T(A,B).
$$

Define
$$
    I_T(A)
    :=
    \wind\bigl(\widetilde{P}_T(O,A)\bigr)
    \in \Z_p.
$$
For $x \in \Lambda_r$, define $\theta(x)=0$ if $\rho(x)<r$. If $\rho(x)=r$, then there is a unique $\theta(x)\in\Z_p$ such that
$$
    x=g^{\theta(x)}\widehat{\pi(x)}.
$$
We define the label of $x$ with respect to $T$ by
$$
    \ell_T(x)
    :=
    I_T(\pi(x))+\theta(x)
    \pmod p.
$$
Thus for every boundary point $x$,
$$
    \ell_T(gx)
    =
    \ell_T(x)+1.
$$

For later use, we also define a path between arbitrary representatives. Given $u,v \in \Lambda_r$, let
$$
    A:=\pi(u),
    \qquad
    B:=\pi(v).
$$
Starting from $u$, move along positively oriented special edges to $\widehat{A}$, then follow $\widetilde{P}_T(A,B)$, and finally move along positively oriented special edges from $\widehat{B}$ to $v$. Denote the resulting path by $\widetilde{P}_T(u,v)$. By construction,
$$
    \len\bigl(\widetilde{P}_T(u,v)\bigr)
    =
    d_T(\pi(u),\pi(v)),
$$
and
$$
    \wind\bigl(\widetilde{P}_T(u,v)\bigr)
    =
    \ell_T(v)-\ell_T(u).
$$

Given $k$ dominating trees $T_1,\ldots,T_k$, define the joint label
$$
    \ell(x)
    :=
    \bigl(
        \ell_{T_1}(x),\ldots,\ell_{T_k}(x)
    \bigr)
    \in\Z_p^k.
$$
For every $x \in \Lambda_r\cap\partial B_r$,
$$
    \ell(gx)
    =
    \ell(x)+(1,\ldots,1).
$$

\subsection{Description of Triangulations}

We require a triangulation that is compatible with both the lattice structure and the cyclic action. The following technical lemma provides such a triangulation. The proof of this lemma is deferred to \autoref{sec:proof_of_equivariant_triangulation}.

\begin{restatable}{lemma}{EquivariantTriangulation}\label{lem:equivariant_triangulation}
There exist positive integers $L$ and $C_\Gamma$, depending only on
$k$ and $p$, such that, whenever $L \mid r$, the body $B_r$
admits a finite triangulation $\Gamma_r$ satisfying:
\begin{enumerate}
    \item $V(\Gamma_r) \subseteq B_r \cap \Z^D = \Lambda_r$;
    \item $\partial\Gamma_r$ triangulates $\partial B_r$, and $g\sigma \in \partial\Gamma_r$ for every $\sigma \in \partial\Gamma_r$;
    \item $\rho(u-v) \leq C_\Gamma$ for every $\sigma \in \Gamma_r$ and $u,v \in V(\sigma)$.
\end{enumerate}
\end{restatable}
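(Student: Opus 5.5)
The plan is to build $\Gamma_r$ in two stages. First we triangulate the unit body $B_1$ with a fixed triangulation that is $g$-compatible and has rational vertices. Then we dilate it and refine each dilated simplex by a canonical lattice subdivision whose pieces have bounded size. The starting observation is that $B_1$ is a convex polytope with integer vertices. In completed coordinates, $\rho(x)\le 1$ cuts out the convex hull of the points with a single block equal to $e_a-e_b$ and all other blocks zero, together with the analogous points in every other block. Since $g$ is linear and preserves $\rho$, it permutes the faces of $B_1$.

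\textbf{Step 1 (coarse triangulation).} Take the barycentric subdivision $\mathcal{S}$ of the boundary face complex of $B_1$. Its vertices are the barycenters $b_F$ of the proper faces $F$, and its simplices are $\conv\{b_{F_0},\ldots,b_{F_t}\}$ for chains $F_0\subsetneq\cdots\subsetneq F_t$. Because $g$ is linear and maps faces to faces, we have $g b_F=b_{gF}$, so $\mathcal{S}$ is $g$-invariant. Coning $\mathcal{S}$ from the origin triangulates $B_1$, since $B_1$ is star-shaped about $0$. We order the vertices of each simplex by putting $0$ first and then the barycenters in increasing face dimension. This ordering is consistent on shared faces, and on the boundary it is preserved by $g$ because $\dim gF=\dim F$. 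Each barycenter is rational with denominator dividing some integer depending only on $k$ and $p$. We let $L_0$ be a common denominator and set $L:=L_0$, so that $L\cdot\mathcal{S}$ and its cone have integer vertices.

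\textbf{Step 2 (refinement).} For $r=Lm$ we have $B_r=m\cdot(L B_1)$. Each coarse simplex $\Delta=\conv\{w_0,\ldots,w_d\}$ with $w_i\in\Z^D$ dilates to $m\Delta$, which we subdivide by the edgewise (Freudenthal--Kuhn) subdivision with respect to the chosen vertex order. Its vertices are the points $\sum_i c_i w_i$ with $c_i\in\Z_{\ge0}$ and $\sum_i c_i=m$, so they lie in $\Z^D\cap B_r$; this gives property 1. Every small simplex is an integer translate of a simplex whose edges are sums of at most $d$ vectors $w_i-w_j$. Hence its $\rho$-diameter is at most $D\cdot\max\rho(w_i-w_j)$, which we take as $C_\Gamma$; this gives property 3.

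\textbf{Step 3 (compatibility).} The edgewise subdivision of a simplex restricts on each face to the edgewise subdivision of that face with the induced order. Since our orders agree on shared faces, the pieces glue into a genuine triangulation of $B_r$. Its boundary simplices are exactly the refined simplices lying in $m\cdot L\,\partial B_1=\partial B_r$, so $\partial\Gamma_r$ triangulates $\partial B_r$. For equivariance, $g$ maps a boundary coarse simplex to another boundary coarse simplex and preserves the vertex order. Since $g$ is linear, it carries the edgewise subdivision of $m\Delta$ onto that of $m\,g\Delta$. Therefore $g\sigma\in\partial\Gamma_r$ for every $\sigma\in\partial\Gamma_r$, which is property 2.

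I expect the main obstacle to be exactly this compatibility and equivariance of the refinement. A generic fine lattice triangulation would not glue consistently across faces and would not commute with $g$. The key device is the order on barycentric-subdivision vertices by face dimension, which is intrinsic and hence $g$-invariant. If the edgewise subdivision causes technical trouble, I would fall back on iterated barycentric-type refinements, which are canonical, and argue that they still have integer vertices after suitable scaling.
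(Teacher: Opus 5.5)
Your proposal is correct, but it reaches the lemma by a different route from the paper.

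\textbf{The paper's route.} It works directly at scale $q=r/L$. It cuts $B_q$ by a $g$-invariant arrangement consisting of all integer-level hyperplanes of the facet functionals of $\rho$, the coordinate functionals, and their $g$-translates, so every cell lies in a unit cube. It then takes the barycentric subdivision, which is automatically $g$-invariant. Finally it clears denominators uniformly in $q$ using Cramer's rule, an lcm of determinants, and a bound on the number of vertices per cell.

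\textbf{Your route.} You triangulate the fixed lattice polytope $B_1$ only once: the barycentric subdivision of its boundary complex, coned from $0$. You clear denominators once. Fineness then comes from the edgewise (Freudenthal--Kuhn) subdivision of the dilates. Gluing and equivariance are guaranteed by the intrinsic, $g$-invariant vertex order by face dimension.

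\textbf{What each approach buys.} Your route gives much more explicit constants. $L$ divides $\lcm(1,\ldots,kp(p-1))$, since every face of $B_1$ has at most $kp(p-1)$ vertices. Also $C_\Gamma\le 2DL$, since $\rho(w_i-w_j)\le 2L$. It also avoids analyzing an $r$-dependent arrangement. The price is that you rely on standard but nontrivial facts about edgewise subdivision: it is a triangulation, it restricts to each face as the edgewise subdivision with the induced order, and it commutes with order-preserving affine maps. You should either cite these (Freudenthal; Edelsbrunner--Grayson) or check them via the coordinates $y_i=\sum_{l\ge i}c_l$. In those coordinates $m\Delta$ becomes $\{m\ge y_1\ge\cdots\ge y_d\ge 0\}$, which is a union of Kuhn simplices. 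The paper, by contrast, needs only barycentric subdivisions of polyhedral complexes.

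\textbf{A point to state explicitly.} A refined simplex contained in $\partial B_r$ must be carried by a boundary coarse simplex. This holds because any point with positive weight on the cone vertex $0$ has $\rho<r$. It is what identifies $\partial\Gamma_r$ with the refinement of $mL\,\mathcal{S}$, and so lets your equivariance argument cover all of $\partial\Gamma_r$.
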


We henceforth assume that $r$ is a sufficiently large multiple of $L$ and fix a triangulation $\Gamma_r$ satisfying \Cref{lem:equivariant_triangulation}.

\subsection{A Balanced Simplex}

Fix the triangulation $\Gamma_r$ given by \Cref{lem:equivariant_triangulation}. Since every vertex of $\Gamma_r$ belongs to $\Lambda_r$, the joint label
$$
    \ell(v)
    =
    \bigl(
        \ell_{T_1}(v),\ldots,\ell_{T_k}(v)
    \bigr)
    \in \Z_p^k
$$
is defined for every $v \in V(\Gamma_r)$. Moreover, for every boundary vertex $v \in V(\partial \Gamma_r)$,
$$
    \ell(gv)
    =
    \ell(v)+(1,\ldots,1).
$$

We now encode these discrete labels by vectors in $\R^D$ and extend the resulting map affinely over $\Gamma_r$. The $g$-invariance of the triangulation will ensure that the extension is $\Z_p$-equivariant on $\partial B_r$, allowing us to apply \Cref{thm:MV_G_zero}.

Let $\mathbf{1}\in\R^{p-1}$ be the all-ones vector, and let $e_1,\ldots,e_{p-1}$ be the standard basis vectors. For $a\in\Z_p$, define
$$
    q_0:=-\frac1p\mathbf{1},
    \qquad
    q_a:=e_a-\frac1p\mathbf{1}
    \quad\text{for every }1\le a<p.
$$
Under the cyclic linear action defined above,
$$
    gq_a=q_{a+1} \quad \text{for every }a \in \Z_p,
$$
where the subscript is taken modulo $p$.

For each vertex $v\in V(\Gamma_r) \subseteq \Lambda_r$, define
$$
    F(v)
    :=
    \bigl(
        q_{\ell_{T_1}(v)},\ldots,q_{\ell_{T_k}(v)}
    \bigr)
    \in
    (\R^{p-1})^k
    =
    \R^D.
$$
Extend $F$ affinely over every simplex of $\Gamma_r$: if $\sigma=\conv\{v_0,\ldots,v_t\}$ and $x=\sum_{i=0}^t\lambda_i v_i\in\sigma$, define
$$
    F(x):=\sum_{i=0}^t\lambda_iF(v_i).
$$

\begin{lemma}
\label{lem:equivariant_extension}
The map $F:B_r\to\R^D$ is continuous, and its restriction to $\partial B_r$ is $\Z_p$-equivariant. Moreover, the actions of $\langle g\rangle$ on $\partial B_r$ and on $\R^D\setminus\{0\}$ are free, while the origin is fixed. Consequently, there exists $x^*\in B_r$ such that $F(x^*)=0$.
\end{lemma}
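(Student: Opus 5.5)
The argument checks the three hypotheses of \Cref{thm:MV_G_zero} with $M=B_r$, $G=\langle g\rangle\cong\Z_p$, and $f=F$.

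\emph{Continuity.} $F$ is affine on each simplex of $\Gamma_r$. If two simplices meet, their intersection is a common face, and on that face both affine extensions are determined by the same vertex values. So the piecewise definition is consistent, and since $\Gamma_r$ is finite and covers $B_r$, the pasting lemma gives continuity of $F$.

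\emph{Equivariance on the boundary.} Take $x\in\partial B_r$ and a simplex $\sigma\in\partial\Gamma_r$ containing it, say $x=\sum_i\lambda_iv_i$. By \Cref{lem:equivariant_triangulation}(2), $g\sigma$ is again a simplex of $\partial\Gamma_r$. Since $g$ is linear, $g\sigma=\conv\{gv_0,\dots,gv_t\}$ and $gx=\sum_i\lambda_i\,gv_i$ with the same barycentric coordinates. Hence $F(gx)=\sum_i\lambda_iF(gv_i)$. Each $v_i$ is a boundary lattice point, so $\ell_{T_j}(gv_i)=\ell_{T_j}(v_i)+1$. Using $gq_a=q_{a+1}$ blockwise, this gives $F(gv_i)=gF(v_i)$. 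Linearity of $g$ then yields $F(gx)=g\sum_i\lambda_iF(v_i)=gF(x)$. The identity $gq_a=q_{a+1}$ itself follows directly from the formula for $g$ on a block, once one notes that the completed coordinates of $q_a$ are $e_a-\frac1p\mathbf 1$ in $\R^p$.

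\emph{Freeness and the fixed origin.} The origin is fixed because $g$ is linear. For freeness it suffices to check $g^s$ with $s\in\{1,\dots,p-1\}$, since every nonidentity element of $\Z_p$ has this form. Suppose $g^sx=x$. Then \Cref{lem:cyclic_displacement} gives $0=\rho(x-g^sx)\ge\frac2p\rho(x)$, so $\rho(x)=0$ and hence $x=0$. This one computation covers both spaces. On $\partial B_r$ we have $\rho(x)=r>0$, so no nonidentity element fixes a boundary point. On $\R^D\setminus\{0\}$ we have $\rho(x)>0$ because $\rho$ is a norm.

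\emph{Applying the theorem.} It remains to confirm the topological hypotheses. $B_r$ is homeomorphic to a closed $D$-ball, so it is a compact, connected, orientable manifold whose boundary $\partial B_r$ is a $(D-1)$-sphere. \Cref{thm:MV_G_zero} then yields $x^*\in B_r$ with $F(x^*)=0$.

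The main point needing care is the equivariance step: one must make sure that $g$ carries boundary simplices to simplices of the triangulation, which is exactly what \Cref{lem:equivariant_triangulation}(2) supplies. Everything else is routine.
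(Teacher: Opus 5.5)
Your proposal is correct and matches the paper's proof essentially step for step. Both argue continuity from agreement on common faces, and both get vertex-level equivariance from $\ell_T(gv)=\ell_T(v)+1$ and $gq_a=q_{a+1}$, extended affinely using the $g$-invariance of $\partial\Gamma_r$. Freeness comes from \Cref{lem:cyclic_displacement}, and the conclusion from \Cref{thm:MV_G_zero}. Your check of $gq_a=q_{a+1}$ via the completed coordinates $e_a-\frac1p\mathbf{1}\in\R^p$ is a small detail that the paper only asserts.
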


\begin{proof}
These extensions agree on common faces, and hence define a continuous piecewise-affine map $F:B_r\to\R^D$.

We verify the hypotheses of \Cref{thm:MV_G_zero}. Since $\rho$ is a norm, $B_r$ is homeomorphic to the closed Euclidean $D$-ball. Hence it is a compact, connected, orientable $D$-dimensional manifold with $\partial B_r\cong S^{D-1}$.

Since $g$ is linear, the origin is fixed by the action. For the action of $\langle g \rangle \cong \Z_p$ on $\R^D$, by \Cref{lem:cyclic_displacement}, any point fixed by a nontrivial element of
$\Z_p$ must be the origin. Indeed, if $g^s x=x$ for some $1\le s<p$, then
$$
    0
    =
    \rho(x-g^s x)
    \ge
    \frac{2}{p}\rho(x),
$$
and hence $x=0$. This also shows that the action is free on $\R^D\setminus\{0\}$ and, in particular, on
$\partial B_r$.

It remains to verify equivariance on the boundary. For every boundary vertex $v\in V(\partial\Gamma_r) \subseteq \partial B_r$,
$$
    F(gv)
    =
    \bigl(
        q_{\ell_{T_1}(gv)},\ldots,q_{\ell_{T_k}(gv)}
    \bigr)=
    \bigl(
        q_{\ell_{T_1}(v)+1},\ldots,
        q_{\ell_{T_k}(v)+1}
    \bigr)=
    gF(v).
$$
Now let $x\in\partial B_r$, and let $\sigma=\conv\{v_0,\ldots,v_t\}\in\partial\Gamma_r$ be a boundary simplex containing $x$. Write
$$
    x=\sum_{i=0}^t\mu_i v_i,
    \qquad
    \mu_i\ge0,
    \qquad
    \sum_{i=0}^t\mu_i=1.
$$
By \Cref{lem:equivariant_triangulation}, $g\sigma$ is also a boundary simplex. Therefore,
$$
    F(gx)
    =
    F\left(\sum_{i=0}^t\mu_i gv_i\right)
    =
    \sum_{i=0}^t\mu_iF(gv_i)
    =
    \sum_{i=0}^t\mu_i gF(v_i)
    =
    gF(x).
$$
Hence $F|_{\partial B_r}$ is $\Z_p$-equivariant. By \Cref{thm:MV_G_zero}, there exists $x^*\in B_r$ such that $F(x^*)=0$.
\end{proof}

We now use the zero of $F$ to obtain the desired pair of vertices.

\begin{lemma}
\label{lem:separated_labels}
There exist a simplex $\sigma\in\Gamma_r$ and two vertices
$u,v\in V(\sigma)$ such that
$$
	\ell_{T_j}(u)\ne\ell_{T_j}(v)
	\qquad
	\text{for every }j\in[k].
$$
\end{lemma}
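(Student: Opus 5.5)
We take the zero $x^*$ of $F$ provided by \Cref{lem:equivariant_extension} and choose a simplex $\sigma\in\Gamma_r$ containing $x^*$. Write $x^*=\sum_{v\in V(\sigma)}\lambda_v v$ with $\lambda_v\ge 0$ and $\sum_v\lambda_v=1$. By the affine definition of $F$,
$$
    0=F(x^*)=\sum_{v\in V(\sigma)}\lambda_v F(v).
$$
Projecting onto the $j$-th block gives $\sum_{v}\lambda_v q_{\ell_{T_j}(v)}=0$ in $\R^{p-1}$. For $a\in\Z_p$, set $w_{j,a}:=\sum_{v:\ell_{T_j}(v)=a}\lambda_v$. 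Then
$$
    \sum_{a\in\Z_p} w_{j,a}\,q_a=0.
$$

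The first step is to show that $w_{j,a}=1/p$ for every $a$. Since $q_a=e_a-\frac1p\mathbf{1}$ for $a\ge1$ and $q_0=-\frac1p\mathbf{1}$, the $b$-th coordinate of $\sum_a w_{j,a}q_a$ equals $w_{j,b}-\frac1p\sum_a w_{j,a}=w_{j,b}-\frac1p$ for each $1\le b<p$. Setting these coordinates to zero gives $w_{j,b}=1/p$ for $b\ne0$, and then $w_{j,0}=1-(p-1)/p=1/p$. In other words, the vectors $q_0,\ldots,q_{p-1}$ are affinely independent with centroid $0$, so the only convex combination equal to $0$ is the uniform one.

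The second step is the probabilistic averaging argument. Sample $U,V\in V(\sigma)$ independently, each with distribution $\{\lambda_v\}$. For each $j$,
$$
    \Pr[\ell_{T_j}(U)=\ell_{T_j}(V)]=\sum_{a}w_{j,a}^2=p\cdot\frac1{p^2}=\frac1p.
$$
By the union bound, the probability that some $j$ has a collision is at most $k/p<1$, because $p>k$. Hence, with positive probability, $\ell_{T_j}(U)\ne\ell_{T_j}(V)$ holds for every $j$. Fixing such an outcome $(u,v)$ proves the lemma.

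Because the labels of $u$ and $v$ differ in every coordinate, we automatically get $u\ne v$, so they are two genuinely distinct vertices of $\sigma$. We also note that we never need $x^*$ to lie in the interior of $\sigma$: vertices with $\lambda_v=0$ simply never get sampled. The only step requiring care is the coordinate computation showing that the balance is exactly uniform, and that follows directly from the explicit form of the $q_a$.
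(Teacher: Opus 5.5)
Your proposal is correct and follows essentially the same route as the paper: you take a simplex containing the zero of $F$, use the explicit form of the $q_a$ to show each label class has weight exactly $1/p$ in every block, and then sample two vertices independently and apply the union bound with $k/p<1$. Your remark that the $q_a$ are affinely independent with centroid $0$ is a correct shortcut for the same coordinate computation the paper carries out.
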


\begin{proof}
By \Cref{lem:equivariant_extension}, there exists $x^*\in B_r$ such
that $F(x^*)=0$. Let $\sigma\in\Gamma_r$ be a simplex containing
$x^*$, and write
$$
	x^*
	=
	\sum_{v\in V(\sigma)}\lambda_vv,
	\qquad
	\lambda_v\ge0,
	\qquad
	\sum_{v\in V(\sigma)}\lambda_v=1.
$$
Since $F$ is affine on $\sigma$,
$$
	0
	=
	F(x^*)
	=
	\sum_{v\in V(\sigma)}\lambda_vF(v).
$$

Fix $j\in[k]$. Considering the $j$-th $(p-1)$-dimensional block gives
$$
	0
	=
	\sum_{v\in V(\sigma)}
	\lambda_vq_{\ell_{T_j}(v)}.
$$
For $a\in\Z_p$, let
$$
	w_{j,a}
	:=
	\sum_{\substack{v\in V(\sigma)\\
	\ell_{T_j}(v)=a}}
	\lambda_v.
$$
Then $\sum_{a\in\Z_p}w_{j,a}=1$ and
$$
	\sum_{a\in\Z_p}w_{j,a}q_a=0.
$$
Looking at coordinate $b\in\{1,\ldots,p-1\}$ yields
$$
	0
	=
	w_{j,b}
	-
	\frac1p\sum_{a\in\Z_p}w_{j,a}
	=
	w_{j,b}-\frac1p.
$$
Hence $w_{j,b}=1/p$ for $1\le b<p$, and consequently
$w_{j,0}=1/p$ as well. Therefore,
$$
	\sum_{\substack{v\in V(\sigma)\\
	\ell_{T_j}(v)=a}}
	\lambda_v
	=
	\frac1p
$$
for every $j\in[k]$ and $a\in\Z_p$.

We now independently sample two vertices $U,V\in V(\sigma)$ according
to the distribution $\{\lambda_v\}$ on $V(\sigma)$. For every fixed
$j\in[k]$,
$$
	\Pr[\ell_{T_j}(U)=\ell_{T_j}(V)]
	=
	\sum_{a\in\Z_p}
	\Pr[\ell_{T_j}(U)=a]
	\Pr[\ell_{T_j}(V)=a]
	=
	p\cdot\frac1{p^2}
	=
	\frac1p.
$$
Then by the union bound,
$$
	\Pr\bigl[
		\exists j\in[k]:
		\ell_{T_j}(U)=\ell_{T_j}(V)
	\bigr]
	\le
	\frac{k}{p}
	<
	1.
$$
Thus, there exist $u,v\in V(\sigma)$ such that
$$
	\ell_{T_j}(u)\ne\ell_{T_j}(v)
	\qquad
	\text{for every }j\in[k].
$$
\end{proof}

\subsection{Final Step}

By \Cref{lem:separated_labels}, there exist a simplex
$\sigma\in\Gamma_r$ and two vertices
$u,v\in V(\sigma)\subseteq\Lambda_r$ such that
$$
	\ell_{T_j}(u)\ne\ell_{T_j}(v)
	\qquad
	\text{for every }j\in[k].
$$

By \Cref{lem:equivariant_triangulation}, $\rho(u-v) \le C_\Gamma$. A standard greedy sequence of unit transfers between completed coordinates gives a path from $u$ to $v$ of length $\rho(u-v)$, all of whose vertices lie in $B_r$. Some transfers may connect two boundary vertices and hence fail to be ordinary edges; each such transfer can be replaced by two ordinary moves, first entering the layer $\rho=r-1$ and then returning to the boundary. Therefore,
$$
    d_{\widetilde G_r}(u,v)
    \le 2\rho(u-v)
    \le 2C_\Gamma.
$$
Since $G_r$ is obtained by contracting zero-weight special edges,
$$
    d_{G_r}(\pi(u),\pi(v))
    =d_{\widetilde G_r}(u,v)
    \le 2C_\Gamma.
$$

We next show that $\pi(u)$ and $\pi(v)$ are far apart in every tree.

\begin{lemma}\label{lem:unwinding}
Let $P$ be a path in $\widetilde{G}_r$ from $x$ to $y$, and suppose that
$\wind(P)=a\in\Z_p$. Then there is an ordinary path from $x$ to
$g^{-a}y$ whose length equals the weighted length of $P$.
\end{lemma}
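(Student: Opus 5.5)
We argue by induction on the number of special-edge traversals in $P$, unwinding one traversal at a time by applying a power of $g$ to the remainder of the path. The basic observation is that $g$ is a linear map preserving $\rho$ and $\Z^D$. It therefore maps $\Lambda_r$ onto itself and $\partial B_r$ onto itself. It also maps ordinary edges to ordinary edges: if $\rho(x-y)=1$ and $\min\{\rho(x),\rho(y)\}<r$, the same holds for $gx,gy$. Likewise it maps special edges to special edges, since $\{x,gx\}$ with $\rho(x)=r$ goes to $\{gx,g^2x\}$. Hence for every $t\in\Z$, $g^t$ is a weight-preserving automorphism of $\widetilde G_r$, and it transforms a path into a path of the same length and the same winding number (the orientation $z\to gz$ is preserved).

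Write $P=(v_0=x,v_1,\ldots,v_m=y)$ and proceed by induction on the number of special-edge traversals in $P$. If there are none, $P$ is ordinary and $\wind(P)=0$, so $P$ itself works. Otherwise let $\{v_{i-1},v_i\}$ be the first special edge traversed. It satisfies $v_i=g^{\epsilon}v_{i-1}$ with $\epsilon=+1$ for a positive traversal and $\epsilon=-1$ for a negative one. Define the new path
$$
P'=(v_0,\ldots,v_{i-1},\,g^{-\epsilon}v_{i+1},\ldots,g^{-\epsilon}v_m).
$$
The step from $v_{i-1}$ to $g^{-\epsilon}v_{i+1}$ is the image under $g^{-\epsilon}$ of the edge $\{v_i,v_{i+1}\}$, because $g^{-\epsilon}v_i=v_{i-1}$. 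Consequently $P'$ is a path in $\widetilde G_r$ from $x$ to $g^{-\epsilon}y$.

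We then compare $P'$ with $P$. The path $P'$ drops one zero-weight edge and otherwise consists of edges of $P$ or their $g^{-\epsilon}$-images. So $\len(P')=\len(P)$, the number of special traversals decreases by one, and $\wind(P')=a-\epsilon$, since the traversals after position $i$ keep their signs. By induction there is an ordinary path from $x$ to
$$
g^{-(a-\epsilon)}g^{-\epsilon}y=g^{-a}y
$$
of length $\len(P)$. This is well defined modulo $p$ because $g^p=\mathrm{id}$.

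A few edge cases need care, though none is a genuine obstacle. If the special edge is the last step ($i=m$), simply delete it: this gives a path from $x$ to $v_{m-1}=g^{-\epsilon}y$. We also allow consecutive vertices to coincide, or paths of length zero. The key step is the automorphism claim for $g$, in particular that the condition $\min\{\rho(x),\rho(y)\}<r$ on ordinary edges is preserved; this follows from the $g$-invariance of $\rho$.
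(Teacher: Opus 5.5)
Your proof is correct and is essentially the paper's argument. The paper applies $g^{-s_j}$, with $s_j$ the cumulative winding, to each ordinary segment between consecutive special edges all at once; your induction performs the same rotations one special edge at a time and ends with the identical ordinary path, and your explicit check that $g$ preserves the weights, orientations, and boundary condition in $E_0$ of $\widetilde G_r$ fills in a step the paper only asserts.
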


\begin{proof}
Write
$$
    P=(v_0,v_1,\ldots,v_m),
    \qquad
    v_0=x,\quad v_m=y.
$$
Let
$$
    1\le i_1<i_2<\cdots<i_t\le m
$$
be exactly the indices for which
$$
    \{v_{i_j-1},v_{i_j}\}\in E_{\mathrm{sp}}.
$$
For each $j\in[t]$, define $\varepsilon_j\in\{1,-1\}$ by
$$
    v_{i_j}=g^{\varepsilon_j}v_{i_j-1},
$$
and let
$$
    s_0:=0,
    \qquad
    s_j:=\sum_{h=1}^j\varepsilon_h\in\Z_p.
$$
Then $s_t=\wind(P)=a$.

Decompose $P$ into the possibly trivial ordinary subpaths
$$
    P_0=(v_0,\ldots,v_{i_1-1}),\ 
    P_j=(v_{i_j},\ldots,v_{i_{j+1}-1})
        \ (1\le j<t),\ 
    P_t=(v_{i_t},\ldots,v_m).
$$
For each $j\in\{0,\ldots,t\}$, apply $g^{-s_j}$ to every vertex of
$P_j$, and denote the resulting ordinary path by $P'_j$.

For every $j\in[t]$, the endpoint of $P'_{j-1}$ and the starting point
of $P'_j$ coincide, since
$$
    g^{-s_j}v_{i_j}
    =
    g^{-(s_{j-1}+\varepsilon_j)}
    g^{\varepsilon_j}v_{i_j-1}
    =
    g^{-s_{j-1}}v_{i_j-1}.
$$
Hence
$$
    P':=P'_0\circ P'_1\circ\cdots\circ P'_t
$$
is a path containing only ordinary edges. By the definition of $E_0$, the map $g$ sends ordinary edges to ordinary edges. Moreover, $P'$ starts at $g^{-s_0}v_0=x$ and ends at $g^{-s_t}v_m=g^{-a}y$.

The transformation preserves the number of ordinary edges, while all special edges have weight zero. Therefore, $\len(P')=\len(P)$.
\end{proof}

Fix $j\in[k]$ and write $T:=T_j$. Let $a = \ell_T(v)-\ell_T(u) \ne 0$. The path $\widetilde{P}_T(u,v)$ has winding $a \in \Z_p$ and length $d_T(\pi(u),\pi(v))$.

By \Cref{lem:unwinding}, there is an ordinary path from $u$ to $g^{-a}v$ of this length. Since every ordinary edge has $\rho$-length $1$,
$$
    d_T(\pi(u),\pi(v))
    \ge
    \rho(u-g^{-a}v).
$$

Suppose first that
$$
    \max\{\rho(u),\rho(v)\}
    \ge
    \frac{r}{2}.
$$
By symmetry, assume $\rho(v)\ge r/2$. Then by \Cref{lem:cyclic_displacement},
$$
    d_T(\pi(u),\pi(v))
    \ge
    \rho(u-g^{-a}v)
    \ge
    \rho(v-g^{-a}v)-\rho(u-v)
    \ge
    \frac{2}{p}\rho(v)-C_{\Gamma}
    \ge
    \frac{r}{p}-C_{\Gamma}.
$$

Suppose instead that
$$
    \rho(u)<\frac{r}{2}
    \qquad\text{and}\qquad
    \rho(v)<\frac{r}{2}.
$$
Since $\wind(\widetilde{P}_T(u,v))\ne0$, the path contains at least one special edge. Before the first special edge, it must travel by ordinary edges from $u$ to the boundary, which requires length at least $r-\rho(u)$. After the last special edge, it must travel from the boundary to $v$, which requires length at least $r-\rho(v)$. Therefore,
$$
    d_T(\pi(u),\pi(v))
    \ge
    (r-\rho(u))+(r-\rho(v))
    >
    r.
$$

Consequently, if $r\ge 2pC_{\Gamma}$, then
$$
    d_{T_j}(\pi(u),\pi(v))
    \ge
    \frac{r}{2p}
$$
for every $j\in[k]$. In particular, $\pi(u)\ne\pi(v)$.

We have therefore found distinct vertices $\pi(u),\pi(v)\in V(G_r)$ such that
$$
    d_{G_r}(\pi(u),\pi(v))
    \le
    2C_{\Gamma},
$$
while
$$
    d_{T_j}(\pi(u),\pi(v))
    \ge
    \frac{r}{2p}
$$
for every $j\in[k]$. Hence every size-$k$ tree cover of $G_r$ has distortion at least
$$
    \beta_r
    :=
    \frac{r}{4pC_{\Gamma}}
    =
    \Omega_k(r).
$$

$V(G_r)$ may not contain exactly $n$ vertices. To obtain exactly $n$ points, choose the largest admissible radius $r$ such that $|V(G_r)|\le n$. Then add the remaining points so that the distance between every two distinct points, at least one of which is new, is a common value
$$
    M>\beta_r \cdot \mathsf{diam}(G_r) := \beta_r \cdot \max_{u, v \in V(G_r)} d_{G_r}(u, v).
$$
If the enlarged metric admitted a size-$k$ tree cover of distortion less than $\beta_r$, then no tree path approximating a pair of original vertices could contain a new point: by domination, such a path would have length at least $2M>\beta_r \cdot \mathsf{diam}(G_r)$. Removing the new points from the trees and reconnecting any resulting components by sufficiently heavy edges would therefore give a size-$k$ tree cover of $G_r$ with the same approximation guarantees on the original pairs, a contradiction. Since $|V(G_r)|=\Theta_k(r^D)$, the maximal choice of $r$ satisfies $r=\Omega_k(n^{1/D})$, and the resulting $n$-point metric has distortion
$$
\Omega_k(n^{1/[k(p-1)]}).
$$
This completes the proof of \Cref{thm:main_thm}.
\section{Discussions}
\label{sec:discussion}

Our construction uses $k(p-1)=O(k^2)$ dimensions. We observe that this quadratic dimension is unavoidable for a broad class of arguments based only on separate marginal balance, independent sampling, and the union bound.

Suppose that, for the $j$-th tree, the possible labels form a set $\sA_j$, and each label $a\in\sA_j$ is encoded by a vector $q_a^{(j)}\in\R^{d_j}$. Assume that the topological argument only guarantees a distribution $\mu_j$ on $\sA_j$ satisfying
$$
    \sum_{a\in\sA_j}\mu_j(a)q_a^{(j)}=0.
$$
If two vertices $U$ and $V$ are sampled independently, then the collision probability in the $j$-th coordinate is
$$
    \Pr[\ell_j(U)=\ell_j(V)]
    =
    \sum_{a\in\sA_j}\mu_j(a)^2.
$$

We recall Carath\'{e}odory's Theorem: if a point $x\in\R^d$ belongs to the convex hull of a set $S\subseteq\R^d$, then $x$ belongs to the convex hull of at most $d+1$ points of $S$. Applying this theorem to the origin, whenever
$$
    0\in\conv\{q_a^{(j)}:a\in\sA_j\},
$$
there exists a zero-barycenter distribution $\mu_j$ supported on at most $d_j+1$ labels. For this distribution, the Cauchy--Schwarz inequality gives
$$
    \sum_{a\in\sA_j}\mu_j(a)^2
    \ge
    \frac{1}{d_j+1}.
$$
Consequently, any universal upper bound on the collision probability that uses only the zero-barycenter condition cannot be smaller than $1/(d_j+1)$.

These marginal distributions can be realized simultaneously by taking their product distribution on $\sA_1\times\cdots\times\sA_k$. Hence, if the final extraction step relies only on independent sampling and the union bound, it must satisfy
$$
    \sum_{j=1}^k\frac{1}{d_j+1}<1.
$$
Let $D'=\sum_{j=1}^k d_j$. By the Cauchy--Schwarz inequality,
$$
    \sum_{j=1}^k\frac{1}{d_j+1}
    \ge
    \frac{k^2}{D+k}.
$$
Therefore, such an argument requires
$$
    D'>k^2-k.
$$

The centered simplex used in our proof matches this barrier: it encodes $p$ labels in dimension $p-1$, and its zero-barycenter distribution is uniform with collision probability $1/p$. Since $p$ is the smallest prime larger than $k$, the total dimension $k(p-1)=O(k^2)$ is optimal up to a constant factor within this framework.

The same limitation persists if $\Z_p$ is replaced by another finite group. As long as each tree is encoded separately, the topological argument provides only a marginal zero-barycenter condition, and the final pair is extracted by independent sampling and the union bound, Carath\'{e}odory's Theorem still forces a total dimension of $\Omega(k^2)$. Thus, changing the acting group alone cannot overcome the quadratic barrier; any further improvement must use information beyond separate marginal balance.

\section*{Acknowledgements}

I would like to thank Hangyu Xu for bringing this interesting problem to my attention and for introducing me to their work \cite{CTX26}. I am also grateful to Boyuan Hong and Xiaowei Ye for many helpful discussions, and for carefully reading earlier drafts of this paper and providing valuable suggestions.

\bibliographystyle{alpha}
\bibliography{refer}

\appendix
\section{Proof of \Cref{lem:equivariant_triangulation}}\label{sec:proof_of_equivariant_triangulation}

We restate \Cref{lem:equivariant_triangulation} here.

\EquivariantTriangulation*

\begin{proof}
For every sign vector $\varepsilon=(\varepsilon_{j,a})\in\{-1,1\}^{k\times p}$, define
$$
    \lambda_\varepsilon(x)
    :=
    \frac{1}{2}
    \sum_{j=1}^k\sum_{a=0}^{p-1}
    \varepsilon_{j,a}\widehat{x}_{j,a}.
$$
After eliminating the completed coordinate $\widehat{x}_{j,0}=-\sum_{a=1}^{p-1}x_{j,a}$, we obtain
$$
    \lambda_\varepsilon(x)
    =
    \sum_{j=1}^k\sum_{a=1}^{p-1}
    \frac{\varepsilon_{j,a}-\varepsilon_{j,0}}{2}\,x_{j,a}.
$$
Thus every $\lambda_\varepsilon$ has integer coefficients, and
$$
    \rho(x)
    =
    \max_{\varepsilon\in\{-1,1\}^{k\times p}}
    \lambda_\varepsilon(x).
$$

Let $\sN$ be the collection of all distinct nonzero functionals $\lambda_\varepsilon$. Let $\pi_i(x_1,\ldots,x_D)=x_i$, and define
$$
    \sA
    :=
    \{\alpha\circ g^s :
    \alpha\in\sN\cup\{\pi_1,\ldots,\pi_D\},
    \ 0\leq s<p\}.
$$
The collection $\sA$ is finite, consists of integer linear functionals, and is closed under composition with $g$ and $g^{-1}$. In fact, $|\sA| \le p \cdot (2^{k p} + D)$.

For $\alpha\in\sA$ and $m\in\Z$, let $H_{\alpha,m}:=\{x:\alpha(x)=m\}$. For a positive integer $q$, consider all such hyperplanes intersecting $B_q$. There are only finitely many of them, and they induce a finite polyhedral subdivision $\sP_q$ of $B_q$.

This subdivision is $g$-invariant, since
$$
    gH_{\alpha,m}
    =
    H_{\alpha\circ g^{-1},m}
    \quad
    \text{for every }\alpha \in \sA\text{ and }m \in \Z.
$$
Moreover,
$$
    B_q
    =
    \bigcap_{\lambda\in\sN}
    \{x:\lambda(x)\leq q\},
$$
so the supporting hyperplanes $\{x:\lambda(x)=q\}$ belong to the arrangement. Hence $\partial B_q$ is a union of faces of $\sP_q$.

Since the coordinate functionals belong to $\sA$, the arrangement contains all integer coordinate hyperplanes intersecting $B_q$. Consequently, every face of $\sP_q$ is contained in a unit cube and has $\ell_\infty$-diameter at most $1$.

For every nonempty face $F$ of $\sP_q$, let $V(F)$ denote its vertex set, and define its barycenter by
$$
    c(F)
    :=
    \frac{1}{|V(F)|}
    \sum_{z\in V(F)}z.
$$
The simplices
$$
    \operatorname{conv}
    \{c(F_0),\ldots,c(F_t)\},
    \qquad
    F_0\subsetneq\cdots\subsetneq F_t,
$$
form the barycentric subdivision $\Gamma_q^{\mathrm{rat}}$ of $\sP_q$. It triangulates $B_q$, and its simplices contained in $\partial B_q$ triangulate $\partial B_q$.

Since
$$
    c(gF)
    =
    gc(F),
$$
the map $g$ permutes the simplices of $\Gamma_q^{\mathrm{rat}}$, including those contained in $\partial B_q$.

It remains to clear the denominators of the barycenters uniformly in $q$. Write every $\alpha\in\sA$ as $\alpha(x)=c_\alpha^{\mathsf T}x$, where $c_\alpha\in\Z^D$, and let $\sC:=\{c_\alpha:\alpha\in\sA\}$.

Let $\sM$ be the finite collection of all invertible $D\times D$ integer matrices whose rows belong to $\sC$, and define
$$
    \Delta
    :=
    \lcm\bigl(|\det A| : A\in\sM\bigr).
$$
The collection $\sM$ is nonempty because the standard coordinate vectors belong to $\sC$.

Let $z$ be a vertex of $\sP_q$. We may choose $D$ incident hyperplanes whose normals are linearly independent. Hence $Az=b$ for some $A\in\sM$ and $b\in\Z^D$. By Cramer's rule, $\Delta \cdot z\in\Z^D$.

Every face of $\sP_q$ has at most
$$
    M_0:=\binom{2|\sA|}{D}
$$
vertices. Let
$$
    Q:=\lcm(1,2,\ldots,M_0)
    \qquad\text{and}\qquad
    L:=\Delta Q.
$$
If $F$ has vertices $z_1,\ldots,z_s$, then $s\leq M_0$ and
$$
    L \cdot c(F)
    =
    \frac{Q \cdot \Delta}{s}\sum_{i=1}^s z_i
    \in\Z^D.
$$
Thus $L$ clears the denominators of every vertex of $\Gamma_q^{\mathrm{rat}}$, independently of $q$.

Now let $r=Lq$, and define
$$
    \Gamma_r
    :=
    \{L\sigma:\sigma\in\Gamma_q^{\mathrm{rat}}\}.
$$
Since $\rho$ is positively homogeneous, $LB_q=B_r$, so $\Gamma_r$ triangulates $B_r$, and all its vertices lie in $\Lambda_r$.

The simplices of $\Gamma_r$ contained in $\partial B_r$ are precisely the scaled boundary simplices of $\Gamma_q^{\mathrm{rat}}$, and therefore form the triangulation $\partial\Gamma_r$ of $\partial B_r$. Since $g$ commutes with scaling and preserves $\partial B_r$,
$$
    \sigma\in\partial\Gamma_r
    \quad\Longrightarrow\quad
    g\sigma\in\partial\Gamma_r.
$$

Finally, every simplex of $\Gamma_r$ has $\ell_\infty$-diameter at most $L$. Since $\rho(z)\leq\|z\|_1\leq D\|z\|_\infty$, for every $\sigma\in\Gamma_r$ and $u,v\in V(\sigma)$,
$$
    \rho(u-v)\leq DL.
$$
We may therefore take $C_\Gamma:=DL$.
\end{proof}

\begin{remark}
The quantitative bounds obtained from the construction above do not give a polynomial dependence of $C_\Gamma$ on $k$ and $p$. We have made no attempt to optimize this dependence, since it affects only the constant hidden in the $\Omega_k(\cdot)$ notation and not the exponent of $n$. We expect that a more carefully designed $g$-invariant triangulation could yield a polynomial dependence on $k$ and $p$.
\end{remark}

\end{document}